\theoremstyle{thmstyletwo}%
\newtheorem{remark}{Remark}%
\numberwithin{equation}{section}
\newtheorem{lemma}{Lemma}[section]
\newtheorem{prop}{Proposition}[section]
\newtheorem{problem}{Problem}
\newcommand{\norm}[1]{\left\Vert {#1} \right\Vert}
\newcommand{\defeq}{\vcentcolon=}
\DeclareMathOperator{\diver}{{\rm div}}
\DeclareMathOperator{\curl}{{\rm curl}}
\DeclareMathOperator{\rot}{{\rm curl}}
\newcommand{\xx}{\boldsymbol{x}}
\newcommand{\uu}{\boldsymbol{u}}
\newcommand{\vv}{\boldsymbol{v}}
\newcommand{\ww}{\boldsymbol{w}}
\newcommand{\nn}{\boldsymbol{n}}
\newcommand{\VV}{\boldsymbol{V}}
\newcommand{\R}{\mathbb{R}}
\newcommand{\Pk}{\mathbb{P}}
\newcommand{\cfan}[1]{\textnormal{\textbf{\detokenize{#1}}}}
\begin{document}

\copyrightyear{2021}
\vol{00}
\pubyear{2021}
\access{Advance Access Publication Date: Day Month Year}
\appnotes{Paper}
\copyrightstatement{Published by Oxford University Press on behalf of the Institute of Mathematics and its Applications. All rights reserved.}
\firstpage{1}


\title[MVEM for linear acoustic wave equation]{Mixed Virtual Element approximation of linear acoustic wave equation}

\author{Franco Dassi\ORCID{0000-0001-5590-3651}
\address{\orgdiv{Dipartimento di Matematica e Applicazioni}, \orgname{Universit\`a degli Studi di Milano Bicocca}, \orgaddress{\street{Via R. Cozzi, 55}, \postcode{20125}, \state{Milano}, \country{Italy}}}}
\author{Alessio Fumagalli\ORCID{0000-0003-3039-5309}
\address{\orgdiv{MOX - Dipartimento di Matematica}, \orgname{Politecnico di Milano}, \orgaddress{\street{P.za L. da Vinci, 32}, \postcode{20133}, \state{Milano}, \country{Italy}}}}
\author{Ilario Mazzieri*\ORCID{0000-0003-4121-8092}
\address{\orgdiv{MOX - Dipartimento di Matematica}, \orgname{Politecnico di Milano}, \orgaddress{\street{P.za L. da Vinci, 32}, \postcode{20133}, \state{Milano}, \country{Italy}}}}
\author{Giuseppe Vacca\ORCID{0000-0002-9035-5731}
\address{\orgdiv{Dipartimento di Matematica}, \orgname{Universit\`a degli Studi di Bari}, \orgaddress{\street{via E. Orabona, 4}, \postcode{70125}, \state{Bari}, \country{Italy}}}}

\authormark{Franco Dassi et al.}

\corresp[*]{Corresponding author: \href{email:ilario.mazzieri@polimi.it}{ilario.mazzieri@polimi.it}}

\received{Date}{0}{Year}
\revised{Date}{0}{Year}
\accepted{Date}{0}{Year}


\abstract{We design a Mixed Virtual Element Method for the approximated solution to the first-order form of the acoustic wave equation. In absence of external load, the semi-discrete method exactly conserves the system energy. To integrate in time the semi-discrete problem we consider a classical $\theta$-method scheme.
We carry out the  stability and convergence analysis in the energy norm for the semi-discrete problem  showing optimal rate of convergence with respect to the mesh size.
We further study the property of energy conservation for the fully-discrete system.
Finally, we present some verification tests as well as  engineering application of the method.}
\keywords{Mixed Virtual Elements; acoustics wave equations; polygonal meshes; energy conservation.
}


\maketitle
\section{Introduction}

The numerical simulation of acoustic, elastic or electromagnetic wave propagation finds application in many scientific disciplines, including aerospace, geophysics, civil engineering,
telecommunications, and medicine for instance.

The present work considers a Mixed Virtual Element method on general polytopal grids for the discretization of the acoustic wave equation written as a first order system of hyperbolic partial differential equations.

In general, mixed methods consider the discretization of vector fields in some $H(\text{div})$-conforming spaces while scalar fields in some $L^2$ spaces. Classes of mixed methods include the well known Raviart-Thomas (RT)~\cite{Raviart1977,Roberts1991,Arnold2005}, the Brezzi-Douglas-Marini (BDM)~\cite{Brezzi1985,Nedelec1986,Brezzi1987,Boffi2013} finite element schemes and more recently the
Mixed Virtual Element Methods (MVEM)~\cite{BeiraodaVeiga2014b,dassi2021}. The VEM, introduced firstly in \cite{BeiraodaVeiga2013a}, have been applied recently to various differential problems, including elasticity \cite{ARTIOLI2017}, Stokes \cite{BLV17,CMM21}, Navier-Stokes \cite{BLV18}, Cahn-Hilliard
\cite{ABSV16}, Darcy \cite{dassi2021}, Helmholtz \cite{MPP19}, Maxwell \cite{BEIRAODAVEIGA2021} and wave \cite{VACCA2017882,DassiFumagalliMazzieriScottiVacca2021,AntoniettiManziniMazzieriMouradVerani_2019} equations.

The major benefit of using VEM, instead of classical approaches, is the fact that it gives the opportunity to preserve at the discrete
level some important properties valid at the continuous level. In particular, it is possible to design discrete spaces with global high regularity, which preserve the polynomial divergence/curl,  that are robust with respect to mesh-locking phenomena.
Moreover, VEM can handle general polytopal meshes, that are particularly useful to account for small features in the model (such as cracks, holes and inclusions), and treat in an automatic way hanging nodes, movable meshes and adaptivity.

The very first analysis of RT finite element discretization applied for  the spatial approximation of the acoustic wave equation is presented by Geveci in \cite{Geveci1988}. He showed that
even if the RT finite elements conserve the energy of the system, when a time discretization is applied, the fully-discrete method produces an implicit time-marching scheme which is inefficient, since the mass matrix is nondiagonal. For this reason it is preferred to use  mass lumping techniques, see e.g.  \cite{Becache2000,Cohen2001} or symplectic schemes that conserve a positive-definite perturbed energy functional \cite{Kirby2015}.
A slightly different formulation, in which two time derivatives appear on the vector variable and none on the other equation has been analysed in literature, see, for instance, \cite{Cowsar1990,Jenkins2002,Jenkins2007,Chung2006,HE202160}.

Discontinuous Galerkin (dG) methods have been also considered for the solution of the wave equation in the mixed form, see for instance \cite{Monk2005,Chung2006,Chung2009,COCKBURN2014,Moiola2018}. However, they have been mostly studied for the second order hyperbolic version: we mention \cite{riviere2003discontinuous,Grote2006} for the scalar case, \cite{riviere2007discontinuous,de2008interior,de2010stability,antonietti2012non,mercerat2015nodal,Delcourte2015,AntoniettiMarcatiMazzieriQuarteroni_2016} for the vectorial case and \cite{AntoniettiMazzieri_2017,AntoniettiBonaldiMazzieri_2019a,AntoniettiMazzieriMuhrNikolicWohlmuth_2019} where a dG approximation on polygonal grids is considered. Within the framework of polytopal methods we mention a recent work by \cite{Burman2022} where the Hybrid High-Order (HHO) method is applied to the wave equation in either first and second order form
and \cite{BdVLV} where the Mimetic Finite Differences is applied in the context of Hamiltonian wave equations.

Here, for the first time, a MVEM is considered for the solution of linear wave acoustics written as a system of first order partial differential equations. The analysis is carried out by taking inspiration from the approaches proposed  in~\cite{BeiraodaVeiga2016,BeiraodaVeiga2019} and \cite{dassi2021}.  Concerning the choice of the degrees of freedom, the proposed scheme can be seen as the extension on polytopal grids of RT finite elements. The integration in time of the semi-discrte problem is achieved by considering a $\theta$-method scheme.

The paper is organised as follows: in Section~\ref{sec:mathematical_model} we review the mathematical model, its weak formulation, a stability result and the energy conservation of the system.
In Section~\ref{sec:VEM} after introducing the virtual element spaces with the associated set of degrees of freedom and defining the discrete bilinear forms, we present the semi-discrete virtual element formulation and establish the well-posedness of the semi-discrete problem, the stability bound for
discrete solution in a discrete energy norm and the energy conservation.
In Section~\ref{sec:theory} we analyse the theoretical properties of the proposed method: by considering the Fortin operator introduced in \cite{dassi2021}, we recall the interpolation estimates. Then, we prove optimal order of convergence for the proposed method. Moreover we estimate the error between the energy of the exact solution and the energy of the virtual element solution.
In Section~\ref{sec:time_integration} we introduce the family of $\theta$-method schemes for integrating in time the semidiscrete problem in Section~\ref{sub:vem problem} and discuss the property of energy conservation.
In Section~\ref{sec:numExe} we provide some experiments to give numerical evidence of the behaviour
of the proposed scheme. Finally, Section \ref{sec:conclusion} is devoted to conclusions and future perspectives.

\subsection*{Notations and Preliminaries}

Throughout the paper we will follow the usual notation for Sobolev spaces
and norms as in \cite{Adams:1975}.
Let $\Omega \subset \R^2$ be the computational domain  with Lipschitz continuous boundary $\partial \Omega$ and external unit normal $\nn$,
we denote with $\xx = (x_1, \, x_2)$ the independent variable.
With a usual notation
the symbols $\nabla$ and $\rot$ denote the gradient and curl for scalar functions, while
$\diver$ denotes the divergence operator for vector fields.
For an open bounded domain $\omega$,
the norm in the space  $L^p(\omega)$ is denoted by $\|{\cdot}\|_{L^p(\omega)}$,
norm and seminorm in $H^{s}(\omega)$ are denoted respectively by
$\|{\cdot}\|_{s,\omega}$ and $|{\cdot}|_{s,\omega}$,
while $(\cdot,\cdot)_{\omega}$ and $\|\cdot\|_{0, \omega}$ denote the $L^2$-inner product and the $L^2$-norm (the subscript $\omega$ may be omitted when $\omega$ is the whole computational
domain $\Omega$).

We recall the following well known functional spaces which will be useful in the sequel
\begin{gather*}
    H(\diver, \omega) \defeq \{\bm{v} \in [L^2(\omega)]^2: \,   \diver
    \bm{v} \in L^2(\omega)\} \,,
    \\
    H(\rot, \omega) \defeq \{\bm{v} \in [L^2(\omega)]^2 :\,
    \rot \bm{v} \in L^2(\omega)\} \,,
\end{gather*}
and introduce the following spaces
\[
\VV \defeq \left\{ \bm{v} \in H(\diver, \Omega)
    \quad \text{s.t.} \quad \bm{v} \cdot \bm{n} = 0 \text{ on } \Gamma_N
    \right\} \,,
\quad
Q \defeq L^2(\Omega) \,,
\]
where $\Gamma_N \subset \partial \Omega$,  equipped with natural inner products and induced norms.

Since we are dealing with a time dependent problem, we will also consider the following Bochner spaces.
Let $T>0$, for space-time functions $v(\xx, t)$ defined on $\omega \times (0, T)$, we denote with $v_t$ the derivative with respect to the time variable.
Furthermore, using standard notations \cite{quarteroni-valli:book}, for a Banach space $V$ with norm $\|\cdot\|_V$, we introduce the space
\[
L^2(0, T; V)
\defeq \left\{ v \colon (0,T) \to V \,\,\,
\text{s.t.} \,\,\,
\text{$v$ measurable,} \,\,
\int_{0}^{T} \Vert v(t) \Vert_V^2  dt < +\infty
    \right\}.
\]
In similar way, for $n\geq 0$, we consider the space $C^n(0, T; V)$.

\section{Mathematical Model}\label{sec:mathematical_model}

Let $\Omega\subset\mathbb{R}^2$ be the polygonal domain.
The boundary  $\partial \Omega$ is divided in three parts, with mutually disjoint interiors, denoted  by $\Gamma_D$, $\Gamma_N$ and $\Gamma_R$, corresponding to Dirichlet, Neumann and Robin boundary conditions, respectively; one or two of them may be empty.

In a time interval $(0,T]$, for a piece-wise constant positive real valued function $c$ (representing the characteristic velocity of the medium), and a scalar source $f$, the following problem is set in $\Omega$.
\begin{problem}[Model problem]\label{pb:wave_model}
    Find $(\bm{u}, p)$ such that
    \begin{subequations}
    \begin{align*}
        \left \{
        \begin{aligned}
             \bm{u}_t(\bm x,t) - \nabla p(\bm x,t) &= \bm{0} \\
            c^{-2} p_t(\bm x,t) - \diver \bm{u}(\bm x,t) &= f(\bm x,t)
        \end{aligned}
        \right .
        \qquad \text{in } \Omega\times(0,T],
    \end{align*}
    supplied with the following boundary conditions
    \begin{equation*}%
    \left \{
    \begin{aligned}
        p(\bm x,t) &= g_D(\bm x,t) & \qquad \text{on } \Gamma_D\times (0,T],\\
        \bm{u}(\bm x,t) \cdot \bm{n} &= g_N(\bm x,t) & \qquad \text{on } \Gamma_N  \times (0,T], \\
        \bm{u}(\bm x,t) \cdot \bm{n} + \alpha^{-1} c^{-1}p(\bm x,t) &= g_R(\bm x,t) & \qquad \text{on } \Gamma_R  \times (0,T],
    \end{aligned}
    \right.
    \end{equation*}
    being $\alpha >0 $ an impedance parameter, and $g_D,g_N$ and $g_R$ given functions, and initial conditions
    \begin{equation*}
    \left \{
    \begin{aligned}
        p(\bm x,0) &= p_0(\bm x) & \qquad \text{in } \Omega,\\
        \bm{u}(\bm x,0)  &= \bm u_0(\bm x) & \qquad \text{in } \Omega.
    \end{aligned}
    \right.
    \end{equation*}
    \end{subequations}
\end{problem}
This problem describes the space-time variation of particle velocity $\bm u$ and acoustic pressure $p$ in a heterogeneous medium where waves propagate with  characteristic velocity $c=\sqrt{\frac{\mu}{\rho}}$, being $\rho>0$ the density  and $\mu>0$ the viscosity of the medium, respectively.
%
Notice that the condition on $\Gamma_R \times [0, T ]$ is known in the literature as impedance boundary condition and includes the low-order absorbing condition when $\alpha = 1$ and $g_R=0$, \cite{Moiola2018}.
In the following we suppose $\bm u_0 \in Q^2$, $p_0 \in Q$,
$f \in L^{2}(0, T; Q)$ and for the sake of presentation we set $g_D = g_N = g_R =0$ and $\alpha=1$. The general case, i.e., with non-homogeneous boundary conditions con be treated similarly.
Next, we define the following bilienar forms
\begin{equation}
\label{eq:continuous_forms}
    \begin{aligned}
        m(\cdot, \cdot)&\colon \VV \times \VV \to \mathbb{R} \qquad
        &m(\bm{u}, \bm{v}) &\defeq (\bm{u}, \bm{v})_\Omega
        \quad &&\forall (\bm{u},
        \bm{v}) \in \VV \times \VV,\\
        n(\cdot, \cdot)&\colon Q \times Q \to \mathbb{R} \qquad
        &n(p, q) &\defeq (c^{-2} p, q)_\Omega \quad &&\forall (p,
        q) \in Q \times Q,\\
        b(\cdot, \cdot)&\colon \VV \times Q \to \mathbb{R} \qquad
        &b(\bm{u}, q) &\defeq (\diver \bm{u}, q)_\Omega \quad &&\forall(\bm{u}, q)\in \VV
        \times Q,\\
        r(\cdot, \cdot)&\colon \VV \times \VV \to \mathbb{R} \qquad
        &r(\bm u, \bm v) &\defeq (c \bm u \cdot \bm n, \bm v \cdot \bm n)_{\Gamma_R} \quad &&\forall (\bm u,
        \bm v) \in \VV \times \VV,
    \end{aligned}
\end{equation}
and the linear functional associated to given data is defined as
\begin{align*}
    \begin{aligned}
    &F(\cdot) \colon Q\to \mathbb{R} \quad
    &&F(q) \defeq (f, q)_\Omega \quad &\forall q \in Q.
    \end{aligned}
\end{align*}
Then the weak formulation of Problem \ref{pb:wave_model} reads as follows
\begin{problem}[Weak problem]\label{pb:wave_weak}
Find the velocity $\uu \in L^2(0,T; \VV) \cap C^0(0,T; Q^2)$,
and the pressure $p \in L^2(0,T; Q) \cap C^0(0,T; Q)$
s.t.
    \begin{align*}
       \left \{
        \begin{aligned}
            m(\bm u_t, \bm{v}) + b(\bm{v}, p) + r(\bm u, \bm{v}) &= 0 && \text{$\forall \bm{v} \in \VV$},
            \\
            n(p_t, q) - b(\bm{u}, q) &= F(q) &&
            \text{$\forall q \in Q$},
        \end{aligned}
         \right.
    \end{align*}
with initial condition $\bm u(\cdot, 0) = \bm u_0$ and $p(\cdot,0)=p_0$ in $\Omega$.
\end{problem}

Using standard arguments is possible to prove that Problem  \ref{pb:wave_weak} is well posed, \cite{BrezziFortin1991,Jenkins2002,Moiola2018} and satisfies the following stability estimate, cf. also \cite{Egger2018}
\begin{equation*}
\sup_{0\leq t \leq T} \|(\bm u, p)(t)\|_\mathcal{E}   \leq  \int_{0}^T \norm{c f(s)}_{0,\Omega} \, {\rm d}s + \|(\bm u_0, p_0)\|_\mathcal{E},
\end{equation*}
where
\begin{equation}\label{def:energy_norm}
\|(\bm v, q)\|^2_\mathcal{E} := \| \bm v\|^2_{0,\Omega} + \| c^{-1} q \|^2_{0,\Omega} \quad \forall \; (\bm v, q) \in [L^2(\Omega)]^2 \times Q \,.
\end{equation}

We finally observe that if $f = 0$ and $\Gamma_R = \emptyset$ Problem \ref{pb:wave_weak} is energy conservative, i.e. the solution $(\uu, p)$ of Problem \ref{pb:wave_weak} satisfies
\begin{equation}
\label{eq:cons-cont}
\|(\bm u, p)(t)\|_\mathcal{E} =
\|(\bm u_0, p_0)\|_\mathcal{E}
\qquad
\forall t \in [0,T] \,.
\end{equation}

\section{Mixed Virtual Elements}
\label{sec:VEM}
In this Section~we describe the virtual element discretization of Problem \ref{pb:wave_weak} on general polygonal meshes.
In particular, in Section~\ref{sub:mesh} we introduce the assumptions on the regularity of the polygonal meshes together with the definition of crucial projector operators that will be fundamental in the construction of the VE discretization.
In Section~\ref{sub:spaces} we describe the $H(\diver)$-conforming VE spaces, whereas in Section \ref{sub:forms} we present the discrete forms. Finally, in Section~\ref{sub:vem problem} we introduce the semi-discrete VE discretization of Problem \ref{pb:wave_weak} and we provide the stability bound of the solution and the energy preserving property of the semi-discrete system.

\subsection{Mesh assumptions and polynomial projections}
\label{sub:mesh}

From now on, we will denote by $E$ a general polygon having $\ell_E$ edges $e$.
For each polygon $E$ and each edge $e$ of $E$ we denote
by $|E|$,
$h_E$ the measure and diameter of $E$
respectively, by $h_e$  we denote the length  of $e$.
%
Furthermore $\nn_E^e$ (resp. $\nn_E)$ denotes the unit outward normal vector to $e$ (resp. to $\partial E$).

Let $\{\Omega_h\}_h$ be a sequence of decompositions of $\Omega$ into general polygons $E$,
where the granularity $h$ is defined as $h = \sup_{E \in \Omega_h} h_E$.
We suppose that $\{\Omega_h\}_h$ fulfills the following assumption:\\
\textbf{(A1) Mesh assumption.}
There exists a positive constant $\rho$ such that for any $E \in \{\Omega_h\}_h$
\begin{itemize}
\item Any $E \in \{\Omega_h\}_h$ is star-shaped with respect to a ball $B_E$ of radius $ \geq\, \rho \, h_E$;
\item Any edge $e$ of any $E \in \{\Omega_h\}_h$,  $h_e \geq\, \rho \, h_E$.
\end{itemize}
We remark that the hypotheses above, though not too restrictive in many practical cases, could possibly be further relaxed, combining the present analysis with the studies in~\cite{BLR:2017,brenner-guan-sung:2017,BdVV:2022}.

Referring to Problem \ref{pb:wave_weak}, we assume that for any $h$ the decomposition $\Omega_h$ matches with the subdivision of $\partial \Omega$ into $\Gamma_D$, $\Gamma_N$, $\Gamma_R$ and  with the definition of the piece-wise constant velocity $c$. We denote by $\Sigma_h$ the set of all the mesh edges and,  for any $E \in \Omega_h$, we define $\Sigma_h^E$ the set of the edges of $E$.
The total number of vertexes, edges and elements in the decomposition $\Omega_h$ are denoted  by $L_V$, $L_e$ and $L_P$, respectively.

Using standard VE notations, for any mesh object $\omega \in \Omega_h \cup \Sigma_h$ and for any
$n \in \mathbb{N}$  let us introduce the space $\Pk_n(\omega)$ to be the space of polynomials defined on $\omega$ of degree $\leq n$,  with the extended notation $\Pk_{-1}(\omega)=\{ 0 \}$.
For any $n \in \mathbb{N}$ and for any non-negative $s\in \R$  we consider the broken spaces
\begin{itemize}
\item $\Pk_n(\Omega_h) = \{q \in L^2(\Omega) \quad \text{s.t.} \quad q|_E \in  \Pk_n(E) \quad \text{for all $E \in \Omega_h$}\}$,

\item $H^s(\Omega_h) = \{v \in L^2(\Omega) \quad \text{s.t.} \quad v|_E \in  H^s(E) \quad \text{for all $E \in \Omega_h$}\}$,
\end{itemize}
equipped with the broken norm and seminorm
\[
\begin{aligned}
&\Vert v\Vert^2_{H^s(\Omega_h)} = \sum_{E \in \Omega_h} \Vert v\Vert^2_{H^s(E)}\,,
\quad
&\vert v\vert^2_{H^s(\Omega_h)} = \sum_{E \in \Omega_h} \vert v\vert^2_{H^s(E)}\,.
\end{aligned}
\]

\noindent
For any $E \in \Omega_h$,  let us introduce  the $\boldsymbol{L^2}$\textbf{-projection} $\Pi_n^{0, E} \colon L^2(E) \to \Pk_n(E)$, given by
\begin{equation*}
\int_{E} q_n (v - \, {\Pi}_{n}^{0, E}  v) \, {\rm d} E = 0 \qquad  \text{for all $v \in L^2(E)$  and $q_n \in \Pk_n(E)$,}
\end{equation*}
with obvious extension for vector functions $\Pi^{0, E}_{n} \colon [L^2(E)]^2 \to [\Pk_n(E)]^2$.
The global counterpart
$\Pi_n^{0} \colon L^2(\Omega_h) \to \Pk_n(\Omega_h)$,
is defined for all $E \in \Omega_h$ by
\begin{equation}
\label{eq:proj-global}
(\Pi_n^{0} v)|_E = \Pi_n^{0,E} v \,.
\end{equation}

In the following the symbol $\lesssim$ will denote a bound up to a generic positive constant,
independent of the mesh size $h$ and of the time step $\tau$ introduced in Section \ref{sec:time_integration}, but which may depend on
$\Omega$, on the ``polynomial" order  $k$ and on the regularity constant appearing in the mesh Assumption \textbf{(A1)}.

\subsection{Virtual Element spaces}
\label{sub:spaces}

We start by presenting an overview of the $H(\diver)$-conforming Virtual Element space, cf. \cite{BeiraodaVeiga2014b,BeiraoVeiga2016,dassi2021}.
%

Let $k \geq 0$ be the  ``polynomial'' order of the method.
We thus consider on each polygonal element $E \in \Omega_h$ the  virtual space
\begin{equation*}
\begin{aligned}
\VV_k(E) = \biggl\{
\vv \in H(\diver, E) \cap H(\rot, E) \,\,\, \text{s.t.} \,\,\,
(i) & \, \,  \diver    \vv  \in \Pk_{k}(E) \,,
\\
(ii)& \, \,  \rot    \vv  \in \Pk_{k-1}(E) \,,
\\
(iii) &
\, \, (\vv \cdot \nn_E^e)|_e \in \Pk_{k}(e) \,\,\, \forall e \in \Sigma^E_h \, \biggr\} \,.
\end{aligned}
\end{equation*}
In the following, we summarize the main properties of the space $\VV_k(E)$. We refer to \cite{BeiraodaVeiga2014b,BeiraoVeiga2016,dassi2021} for a detailed analysis.

\begin{itemize}
\item [\textbf{(P1)}] \textbf{Polynomial inclusion:} $\Pk_k(E) \subseteq \VV_k(E)$;

\item [\textbf{(P2)}] \textbf{Degrees of freedom:}
the following linear operators $\mathbf{D_V}$ constitute a set of DoFs for $\VV_k(E)$: for any $\ww \in \VV_k(E)$ we consider
\begin{itemize}
\item[$\mathbf{D_V1}$] the element moments of the divergence
    \[
    \int_E (\diver \ww) \, p_k \, {\rm d}E
    \qquad \forall p_k \in \Pk_k(E)\setminus \R,
\]
\item[$\mathbf{D_V2}$] the element moments
    \[
    \int_E \ww \cdot (p_{k-1} \xx^\perp) \, {\rm d}E
    \qquad \forall p_k \in \Pk_{k-1},
    \]
    where $\xx^\perp:= \left(y, -x\right)^{\rm T}$;
\item[$\mathbf{D_V3}$] the edge moments
    \[
    \int_e (\ww \cdot \nn_E^e) \, p_k \,{\rm d}e
    \qquad \forall p_k \in \Pk_k(e), \,\, \forall e \in \Sigma_h^E \,.
    \]
\end{itemize}
Therefore the dimension of $\VV_k(E)$ is
\[
\dim(\VV_k(E)) = \ell_E \, (k+1) + k^2 + 2k   \,.
\]
\item [\textbf{(P3)}] \textbf{Computable quantities:}
for any $\ww \in \VV_k(E)$ the DoFs $\mathbf{D_V}$ allow to compute
\[
\Pi^{0,E}_k \ww \,,
\qquad
\diver \ww \,,
\qquad
(\ww \cdot {\nn_E^e})|_e \quad \forall e \in \Sigma_h^E\,.
\]
\end{itemize}
The global virtual element space $\VV_k$ is defined by gluing together all local spaces, that is:
we require that for any internal edge $e \in \Sigma^E_h \cap \Sigma^{E'}_h$, shared by $E$ and $E'$,
\[
\ww|_E \cdot \nn_E^e + \ww|_{E^\prime} \cdot \nn^e_{E^\prime} = 0
\qquad \forall \ww \in \VV_k,
\]
that is in accordance with the DoFs definition $\mathbf{D_V1}$.
Therefore we have
\begin{equation}
\label{eq:VVG}
    \VV_k \defeq \{ \vv \in \VV \quad \text{s.t.} \quad \vv|_E \in
    \VV_k(E)\,\,\, \forall E \in \Omega_h \}.
\end{equation}
The dimension of $\VV_k$ is thus given by
\[
\dim(\VV_k) =  (k+1)L_e + (k^2 + 2k)L_P  \,.
\]
The discrete pressure space $Q_k$ is given by the piecewise polynomial functions of degree $k$, i.e.
\begin{equation}
\label{eq:QG}
    Q_k \defeq \Pk_k(\Omega_h)\,.
\end{equation}

\subsection{Virtual Element forms}
\label{sub:forms}

The next step in the construction of our method is the definition of a discrete version of the continuous forms in \eqref{eq:continuous_forms}.
Following the usual procedure in the VE setting, we need to construct discrete forms that are computable through the  DoFs.
Notice that in the light of property \textbf{(P3)}, for any $\vv_h, \ww_h \in \VV_k$ and $q_h, p_h \in Q_k$ the quantities
\[
\begin{aligned}
n(p_h, q_h)\,,
\qquad
b(\vv_h, q_h)\,,
\qquad
r(\vv_h, \ww_h)\,,
\qquad
F(q_h)
\end{aligned}
\]
are computable.

Whereas for arbitrary functions in $\VV_k$ the form $m(\cdot, \cdot)$ is not computable since the discrete functions are not known in closed form.
Employing property \textbf{(P3)} for any $\vv_h$, $\ww_h \in
\VV_k(E)$ we define the computable local discrete bilinear form:
\begin{equation}
\label{eq:m-VEM}
m_h^{E}(\vv_h, \ww_h) \defeq
\int_E (\Pi_{k}^{0,E} \vv_h) \cdot (\Pi_{k}^{0,E} \ww_h) \, {\rm d}E + h_E^{-2} \mathcal{S}^E(\vv_h, \ww_h) \,.
\end{equation}
The stabilizing term in \eqref{eq:m-VEM} is given by
\begin{equation*}
\mathcal{S}^E(\vv_h, \ww_h) =
S^E \bigl( (I - \Pi^{0,E}_k) \vv_h, \, (I - \Pi^{0,E}_k) \ww_h \bigr) \,,
\end{equation*}
where $S^E(\cdot, \cdot) \colon \VV_k(E) \times \VV_k(E) \to \R$ is a computable symmetric discrete form.
In the present paper we consider the so-called \texttt{dofi-dofi} stabilization \cite{BeiraodaVeiga2013a} defined as follows: let $\vec{\vv}_h$ and $\vec{\ww}_h$ denote the real valued vectors containing the values of the local degrees of freedom (properly scaled) associated to $\vv_h$, $\ww_h$ in the space $\VV_k(E)$ then
\begin{equation*}
S^E(\vv_h, \ww_h) =\vec{\vv}_h \cdot \vec{\ww}_h \,.
\end{equation*}
Under  mesh Assumption \textbf{(A1)} the form $S^E(\cdot, \cdot)$ satisfies the following bounds
(we refer to \cite{dassi2021} for the details)
\begin{equation}
\label{eq:stab2}
\Vert \vv_h \Vert^2_{0,E} \lesssim h_E^2 S^E(\vv_h, \vv_h) \lesssim \Vert \vv_h \Vert^2_{0,E} \,,
\end{equation}
for all $\vv_h \in \VV_k(E) \cap \ker(\Pi^{0,E}_k)$.

The global form $m_h(\cdot, \cdot) \colon
(\VV_k \cup [\Pk_k(\Omega_h)]^2)\times (\VV_k \cup [\Pk_k(\Omega_h)]^2) \to \R$ can be derived adding the local contributions
\begin{equation}
\label{eq:mG-VEM}
m_h(\vv_h, \ww_h) \defeq
\sum_{E \in \Omega_h} m_h^E(\vv_h, \ww_h)
\qquad \forall (\vv_h, \ww_h) \in (\VV_k \cup [\Pk_k(\Omega_h)]^2)
\,.
\end{equation}
Notice that, from \eqref{eq:m-VEM} and \eqref{eq:stab2}, it follows that
\begin{align}
\label{eq:cons}
&m_h(\bm p_k, \vv_h) = m(\bm p_k, \vv_h)
& &\forall \bm p_ k \in [\Pk_k(\Omega_h)]^2\,,
\vv_h \in (\VV_k \cup [\Pk_k(\Omega_h)]^2) \,,
\\
\label{eq:stab3}
&\Vert \vv_h \Vert_{0, \Omega}^2 \lesssim
m_h(\vv_h, \vv_h) \lesssim
\Vert \vv_h \Vert_{0, \Omega}^2
& &\forall \vv_h \in (\VV_k \cup [\Pk_k(\Omega_h)]^2) \,.
\end{align}
\subsection{Virtual Element semi-discrete problem}
\label{sub:vem problem}

Referring to the spaces \eqref{eq:VVG} and \eqref{eq:QG}, the
forms \eqref{eq:continuous_forms} and the
discrete bilinear form \eqref{eq:mG-VEM}, we can state the following semi-discrete problem.

\begin{problem}[VEM problem]\label{pb:wave_vem}
Find  $\uu_h \in L^2(0,T; \VV_k) \cap C^0(0,T; Q^2)$,
and $p_h \in L^2(0,T; Q_k) \cap C^0(0,T; Q_k)$
s.t.
    \begin{align*}
       \left \{
        \begin{aligned}
            m_h({\uu_h}_t, \bm{v}_h) + b(\bm{v}_h, p_h) + r(\uu_h, \bm{v}_h) &= 0 && \text{$\forall \bm{v}_h \in \VV_k$},
            \\
            n({p_h}_t, q_h) - b(\bm{u}_h, q_h) &= F(q_h) &&
            \text{$\forall q_h \in Q_k$},
        \end{aligned}
         \right.
    \end{align*}
with initial condition $\bm u_h(\cdot, 0) = \Pi^0_k \, \bm u_0$ and $p_h(\cdot,0)= \Pi^0_k \, p_0$ in $\Omega$.
\end{problem}

\noindent
The well-posedness of Problem \ref{pb:wave_vem} follows from:
\begin{itemize}
\item discrete inf-sup condition \cite{BeiraoVeiga2016}:
there exists $\widehat{\beta}>0$ s.t.
\[
\inf_{q_h \in Q_k} \sup_{\vv_h \in \VV_k}
\frac{b(\vv_h, q_h)}{\Vert \vv_h \Vert_{\VV} \Vert q_h \Vert_Q}
\geq \widehat{\beta} \,.
\]
\item coercivity on the discrete kernel: the bilinear form
$m_h(\cdot, \cdot)$ satisfies
\[
m_h(\vv_h, \vv_h) \gtrsim \Vert \vv_h \Vert_{\VV}^2
\qquad \forall \vv_h \in \boldsymbol{K}_k\,,
\]
where $\boldsymbol{K}_k \defeq  \{ \bm v_h \in \VV_k\quad \text{s.t.} \quad  b(\bm v_h, q_h) = 0 \,\,\, \forall \, q_h \in Q_k\}$.
\end{itemize}
Let us introduce the discrete energy norm
\begin{equation}\label{def:energy_norm_dis}
\|(\bm v_h, q_h)\|^2_{\mathcal{E}_h} := m_h(\bm v_h, \bm v_h) + \| c^{-1} q_h \|^2_{0,\Omega} \quad \forall \; (\bm v_h, q_h) \in
(\VV_k \cup [\Pk_k(\Omega_h)]^2) \times Q_k \,.
\end{equation}
Then, using analogous argument to that used for the continuous case, the discrete solution $(\uu_h, p_h)$ of Problem \ref{pb:wave_vem}
satisfies the following stability estimate
\begin{equation*}
\sup_{0\leq t \leq T} \|(\bm u_h, p_h)(t)\|_{\mathcal{E}_h}   \leq  \int_{0}^T \norm{c f(s)}_{0,\Omega} \, {\rm d}s + \|(\Pi^0_k \, \bm u_0, \Pi^0_k \, p_0)\|_{\mathcal{E}} \,,
\end{equation*}
where, from \eqref{eq:cons}, we have used that
\[
\|(\bm p_k, q_h)\|_{\mathcal{E}_h} =
\|(\bm p_k, q_h)\|_{\mathcal{E}}
\qquad
\forall (\bm p_k, q_h) \in  [\Pk_k(\Omega_h)]^2 \times Q_k \,.
\]
Furthermore if $f = 0$ and $\Gamma_R = \emptyset$, Problem \ref{pb:wave_vem} is energy conservative, i.e. the solution $(\uu_h, p_h)$ of Problem \ref{pb:wave_vem} satisfies
\begin{equation}
\label{eq:cons-sdisc}
\|(\bm u_h, p_h)(t)\|_{\mathcal{E}_h} =
\|(\Pi^0_k \, \bm u_0, \Pi^0_k \, p_0)\|_{\mathcal{E}}
\qquad
\forall t \in [0,T] \,,
\end{equation}
that is the semi-discrete counterpart of \eqref{eq:cons-cont}.

\begin{remark}
Notice that definitions \eqref{def:energy_norm} and \eqref{def:energy_norm_dis} and bounds \eqref{eq:stab3} imply the following norm equivalence
\begin{equation}
\label{eq:stab4}
\|(\vv_h, q_h)\|_{\mathcal{E}}   \lesssim
\|(\vv_h, q_h)\|_{\mathcal{E}_h} \lesssim
\|(\vv_h, q_h)\|_{\mathcal{E}}   \qquad
\forall \; (\bm v_h, q_h) \in
\VV_k \times Q_k \,.
\end{equation}
\end{remark}

\begin{remark}
\label{rm:3d}
The proposed approach can be easily extended to more general situations such as the three dimensional case \cite{BeiraodaVeiga2014b}, and domains with curved boundary/interfaces \cite{dassi2021,bend}.
The analysis could be developed with very similar arguments to the ones in the forthcoming sections.
\end{remark}


\section{Theoretical analysis} \label{sec:theory}

In this section, we present some theoretical results for the virtual element Problem \ref{pb:wave_vem}.
In Section \ref{sub:int} we review the interpolation estimates, whereas in Section \ref{sub:convergence} and Section \ref{sub:energy}
we provide the convergence analysis and the energy error estimate respectively.

\subsection{Interpolation estimates}
\label{sub:int}
We now recall the optimal approximation properties for the space
$\VV_k$ (see \cite{dassi2021}).
We define the linear Fortin operator $\Pi_k^{F} \colon [H^1(\Omega)]^2 \to \VV_k$ in the following way.
For any $\ww \in [H^1(\Omega)]^2$ and for all $e \in \Sigma_h$ and $E \in \Omega_h$,  $\Pi_k^{F}\ww$ is determined by
\begin{eqnarray*}
\hspace{-1em}\int_E \diver (\ww - \Pi_k^{F} \ww) \, p_k \, {\rm d}E = 0
&\forall&\hspace{-1em}p_k \in \Pk_k(E)\setminus \R\,,
\\
\hspace{-1em}\int_E \rot (\ww - \Pi_k^{F} \ww) \, p_{k-1} \, {\rm d}E = 0
&\forall&\hspace{-1em}p_{k-1} \in \Pk_{k-1}(E)\,,
\\
\hspace{-1em}\int_e (\ww - \Pi_k^{F} \ww) \cdot \nn^e p_k \, {\rm
d}e = 0
&\forall&\hspace{-1em}p_k \in \Pk_k(e)\,.
\end{eqnarray*}
The conditions above implies that the following diagram is commutative, i.e.,
\begin{equation*}
\begin{split}
[H^1(\Omega)]^2
\,
&\xrightarrow[]{  \,\,\,\,\, \text{{$\diver$}} \,\,\,\,\,  }
\, \,
Q
\, \,
\xrightarrow[]{\, \, \,\,\,\,\, 0 \,\,\,\,\, \, \,}
\,
0
\\
\Pi_k^F  \bigg\downarrow \qquad &
\quad \quad \,\,\,
\Pi_0^k        \bigg\downarrow
\\
\VV_k
\quad \,\,\,
&\xrightarrow[]{  \,\,\,\,\, \text{{$\diver$}} \,\,\,\,\,  }
\,
Q_k
\,
\xrightarrow[]{\, \, \,\,\,\,\, 0 \,\,\,\,\, \, \,}
\,
0
\end{split}
\end{equation*}
where $0$ is the map that associates to every function the value $0$. In particular, for any $\ww \in [H^1(\Omega)]^2$ it holds
(see \cite{dassi2021})
\begin{equation}
\label{eq:divpf}
\begin{gathered}
\diver (\Pi_k^F \ww) = \Pi^0_k \diver \ww \,,\quad
\curl (\Pi_k^F \ww) = \Pi^0_{k-1} \curl \ww \,,
\\
((\Pi_k^F \ww) \cdot \nn^e)|_e = (\Pi^{0,e}_k (\ww \cdot \nn^e))|_e
\quad \forall e \in \Sigma_h \,.
\end{gathered}
\end{equation}
For the Fortin operator  we have the following interpolation estimate (see \cite[Proposition 4.1]{dassi2021}).
\begin{prop}[Approximation property of $\VV_k$]
\label{prp:int VDG}
Under  Assumption \cfan{(A1)} for any $\vv \in \VV \cap [H^{s}(\Omega_h)]^2$ where $1 \leq s \leq k+1$,  $\Pi^k_{ F} \vv$ satisfies the estimate
\[
\Vert \vv - \Pi^k_{ F} \vv \Vert_{0,\Omega}
\lesssim h^{s} |v|_{s,\Omega_h} \,.
\]
\end{prop}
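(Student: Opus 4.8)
The plan is to reduce the global estimate to a sum of local estimates on each element $E \in \Omega_h$, and then on each $E$ to compare $\Pi_k^F \vv$ with a suitable polynomial approximation of $\vv$ using a local error-equation argument. First I would fix $E \in \Omega_h$ and introduce $\vv_\pi \in [\Pk_k(E)]^2 \subseteq \VV_k(E)$, the $L^2$-projection (or any quasi-optimal polynomial approximant) of $\vv$ on $E$, which by standard polynomial approximation theory on star-shaped domains (Assumption~\textbf{(A1)}) satisfies $\Vert \vv - \vv_\pi \Vert_{0,E} \lesssim h_E^{s}\,|\vv|_{s,E}$ and, crucially, also controls the divergence, curl, and edge-trace seminorms of $\vv - \vv_\pi$ at the appropriate orders. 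Since $\Pk_k(E) \subseteq \VV_k(E)$ by \textbf{(P1)}, the difference $\bm\delta := \Pi_k^F \vv - \vv_\pi$ lies in $\VV_k(E)$.

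Next I would exploit the defining relations of $\Pi_k^F$. Writing $\Pi_k^F \vv - \vv = (\Pi_k^F \vv - \vv_\pi) - (\vv - \vv_\pi) = \bm\delta - (\vv - \vv_\pi)$, the defining conditions of the Fortin operator say precisely that $\diver(\Pi_k^F\vv - \vv)$, $\rot(\Pi_k^F\vv-\vv)$, and the edge traces $(\Pi_k^F\vv-\vv)\cdot\nn^e$ have vanishing moments against $\Pk_k(E)\setminus\R$, $\Pk_{k-1}(E)$, and $\Pk_k(e)$ respectively; combined with \eqref{eq:divpf} this pins down the DoFs $\mathbf{D_V}$ of $\bm\delta$ in terms of $\vv - \vv_\pi$. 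Concretely, $\diver\bm\delta = \Pi_k^0\diver\vv - \diver\vv_\pi = \Pi_k^0\diver(\vv-\vv_\pi) + (\diver\vv - \diver\vv_\pi)$ restricted appropriately, $\rot\bm\delta = \Pi_{k-1}^0\rot(\vv-\vv_\pi)$, and $(\bm\delta\cdot\nn^e)|_e = \Pi_k^{0,e}((\vv-\vv_\pi)\cdot\nn^e)|_e$. Hence every DoF value of $\bm\delta$ is bounded by a corresponding norm of $\vv - \vv_\pi$.

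The key analytic ingredient is then a local stability bound for the virtual space in terms of its DoFs: for any $\bm\delta \in \VV_k(E)$ one has $\Vert\bm\delta\Vert_{0,E} \lesssim h_E\big(\Vert\diver\bm\delta\Vert_{0,E} + \Vert\rot\bm\delta\Vert_{0,E}\big) + h_E^{1/2}\sum_{e\in\Sigma_h^E}\Vert\bm\delta\cdot\nn_E^e\Vert_{0,e}$ (a scaled inverse/norm-equivalence estimate on the polygon, available from \cite{dassi2021} under \textbf{(A1)}; it is essentially a consequence of the DoFs being unisolvent and a scaling argument). Feeding in the bounds on the DoFs of $\bm\delta$ and the polynomial approximation estimates for $\vv - \vv_\pi$ (noting $\Vert\diver(\vv-\vv_\pi)\Vert_{0,E}\lesssim h_E^{s-1}|\vv|_{s,E}$, $\Vert\rot(\vv-\vv_\pi)\Vert_{0,E}\lesssim h_E^{s-1}|\vv|_{s,E}$, and a scaled trace inequality $\Vert(\vv-\vv_\pi)\cdot\nn^e\Vert_{0,e}\lesssim h_E^{-1/2}\Vert\vv-\vv_\pi\Vert_{0,E} + h_E^{1/2}|\vv-\vv_\pi|_{1,E} \lesssim h_E^{s-1/2}|\vv|_{s,E}$) yields $\Vert\bm\delta\Vert_{0,E}\lesssim h_E^{s}|\vv|_{s,E}$. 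A triangle inequality $\Vert\vv - \Pi_k^F\vv\Vert_{0,E} \leq \Vert\vv-\vv_\pi\Vert_{0,E} + \Vert\bm\delta\Vert_{0,E} \lesssim h_E^{s}|\vv|_{s,E}$ and summation over $E \in \Omega_h$ (using $h_E \leq h$) gives the claim. I expect the main obstacle to be the local DoF-stability estimate for $\VV_k(E)$: it is the one step that genuinely uses the structure of the virtual space and the mesh regularity, and it must be invoked carefully (with correct powers of $h_E$) from the reference \cite{dassi2021}; everything else is standard polynomial approximation and trace inequalities.
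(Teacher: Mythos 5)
Your argument is essentially correct, but it is worth noting that the paper does not prove Proposition~\ref{prp:int VDG} at all: the estimate is recalled directly from \cite{dassi2021} (Proposition 4.1 there), so you are reconstructing an omitted proof rather than diverging from one. Your reconstruction follows the standard route used in the mixed-VEM literature and in the cited reference: localize, insert a polynomial approximant $\vv_\pi\in[\Pk_k(E)]^2\subseteq\VV_k(E)$, use the commuting properties of $\Pi_k^F$ (cf.~\eqref{eq:divpf}) to express $\diver$, $\rot$ and the normal traces of $\bm\delta=\Pi_k^F\vv-\vv_\pi$ in terms of $\vv-\vv_\pi$, and conclude with a scaled stability estimate for the local div--curl problem that characterizes $\VV_k(E)$. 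Two points deserve care. First, the crux is precisely the bound $\Vert\bm\delta\Vert_{0,E}\lesssim h_E\bigl(\Vert\diver\bm\delta\Vert_{0,E}+\Vert\rot\bm\delta\Vert_{0,E}\bigr)+h_E^{1/2}\Vert\bm\delta\cdot\nn_E\Vert_{0,\partial E}$: this follows from the well-posedness of the div--curl system with prescribed normal trace on a simply connected polygon, star-shaped as in \cfan{(A1)}, plus a scaling argument, with constant depending only on the shape-regularity parameter $\rho$; this is exactly the ingredient established in the cited works, and you correctly identify it as the only nontrivial step. Second, a small clean-up of your computation: since $\diver\vv_\pi\in\Pk_k(E)$ and $\rot\vv_\pi\in\Pk_{k-1}(E)$, one simply has $\diver\bm\delta=\Pi_k^{0,E}\diver(\vv-\vv_\pi)$ and $\rot\bm\delta=\Pi_{k-1}^{0,E}\rot(\vv-\vv_\pi)$ (no extra ``restricted'' term), and the fact that the first Fortin condition only tests against $\Pk_k(E)\setminus\R$ is harmless because the constant mode is recovered from the edge conditions via the divergence theorem. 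With these details in place, your local bound $\Vert\vv-\Pi_k^F\vv\Vert_{0,E}\lesssim h_E^{s}\vert\vv\vert_{s,E}$ and the summation over $E\in\Omega_h$ give exactly the stated estimate; the hypothesis $s\ge 1$ is what makes the scaled trace inequality for $\vv-\vv_\pi$ legitimate.
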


We now review a classical approximation result for polynomials on star-shaped domains, see for instance \cite{brenner-scott:book}.

\begin{lemma}[Bramble-Hilbert]
\label{lm:bramble}
Under Assumption \cfan{(A1)} and referring to \eqref{eq:proj-global}, for all $q \in H^{s}(\Omega_h)$ where $0 \leq s \leq k+1$,  it holds
\[
\|q -  \Pi^0_k q\|_{\Omega, 0} \lesssim h^{s} \, |q|_{\Omega_h, s} \,.
\]
\end{lemma}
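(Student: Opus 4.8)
The plan is to localize the estimate to each polygon $E \in \Omega_h$ and then sum the local contributions. By the definition \eqref{eq:proj-global} of the global projector we have $(q - \Pi^0_k q)|_E = q|_E - \Pi^{0,E}_k(q|_E)$, so that
\[
\|q - \Pi^0_k q\|_{0,\Omega}^2 = \sum_{E \in \Omega_h} \|q - \Pi^{0,E}_k q\|_{0,E}^2 \,,
\]
and it suffices to establish the local bound $\|q - \Pi^{0,E}_k q\|_{0,E} \lesssim h_E^{s}\,|q|_{s,E}$ with a constant depending only on $k$ and on the mesh regularity constant $\rho$ in Assumption \textbf{(A1)}.

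For the local bound I would first dispose of the case $s = 0$, which is immediate since $\Pi^{0,E}_k$ is an orthogonal projection in $L^2(E)$ and hence a contraction, giving $\|q - \Pi^{0,E}_k q\|_{0,E} \le \|q\|_{0,E} = |q|_{0,E}$. For $0 < s \le k+1$ the key observation is that, being the best $L^2(E)$-approximation of $q$ in $\Pk_k(E)$, the projection satisfies $\|q - \Pi^{0,E}_k q\|_{0,E} \le \|q - \pi\|_{0,E}$ for every $\pi \in \Pk_k(E)$. Choosing $\pi$ to be the averaged Taylor polynomial of $q$ of degree $k$ over the ball $B_E$ provided by Assumption \textbf{(A1)}, the classical Bramble--Hilbert / Dupont--Scott estimate on star-shaped domains (see \cite{brenner-scott:book}) yields $\|q - \pi\|_{0,E} \lesssim h_E^{s}\,|q|_{s,E}$; for non-integer $s$ one invokes the fractional version of the same result, or interpolates between the two neighbouring integer orders.

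Finally I would combine the pieces: inserting the local estimate into the identity above and using $h_E \le h$ for every $E \in \Omega_h$,
\[
\|q - \Pi^0_k q\|_{0,\Omega}^2 \lesssim \sum_{E \in \Omega_h} h_E^{2s}\,|q|_{s,E}^2 \le h^{2s} \sum_{E \in \Omega_h} |q|_{s,E}^2 = h^{2s}\,|q|_{s,\Omega_h}^2 \,,
\]
and taking square roots gives the claim. The only genuinely delicate point is the uniformity of the constant in the local Bramble--Hilbert estimate: one must verify, by a dilation to a unit-diameter configuration, that it is controlled solely by $k$, the space dimension, and the chunkiness parameter of $E$, the latter being bounded by $\rho^{-1}$ thanks to the star-shapedness condition in \textbf{(A1)} (and not by the individual geometry of each element). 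Everything else is routine.
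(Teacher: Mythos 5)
Your proposal is correct and follows exactly the route the paper relies on: the paper does not prove this lemma but cites it as the classical polynomial approximation result on star-shaped domains from Brenner--Scott, and your element-wise argument (best-approximation property of $\Pi^{0,E}_k$ plus the averaged Taylor polynomial estimate, with uniformity of constants via scaling and the chunkiness bound from \textbf{(A1)}, then summing with $h_E \le h$) is precisely the standard proof behind that citation, including the correct handling of $s=0$ and fractional $s$.
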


\subsection{Convergence analysis}
\label{sub:convergence}
In this section we provide the convergence property for the semi-discrete scheme.

\begin{prop}\label{thm:convergence}
Under  Assumption \cfan{(A1)}, let $(\uu, p)$ be the solution of Problem \ref{pb:wave_weak} and
$(\uu_h, p_h)$ be the solution of Problem \ref{pb:wave_vem}.
Assume that
\[
\uu_t, \, p_t \in L^1(0,T; H^{k+1}(\Omega_h))
\qquad \text{and} \qquad
\uu_0, \, p_0 \in H^{k+1}(\Omega_h)\,.
\]
Then for all $t \in (0,T)$  the following error estimate holds:
\begin{multline*}
\|(\bm u - \bm u_h, p-p_h)(t)\|_\mathcal{E} \lesssim
h^{k+1}
\bigl( | \bm u_0|_{k+1, \Omega_h}  + \widehat{c}\,| p_0|_{k+1, \Omega_h} +
| \uu_t|_{L^1(0, t; H^{k+1}(\Omega_h))}  +
\widehat{c}\,| p_t|_{L^1(0, t; H^{k+1}(\Omega_h))}
 \bigr) \,,
\end{multline*}
where $\widehat{c} := \Vert c^{-1} \Vert_{L^{\infty}(\Omega \times (0,T))}$.
\end{prop}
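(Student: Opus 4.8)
The plan is to use the standard energy argument for mixed methods, splitting the error via the Fortin interpolant and the $L^2$-projection. Write $\bm u - \bm u_h = (\bm u - \Pi_k^F \bm u) + (\Pi_k^F \bm u - \bm u_h) =: \bm\eta_u + \bm\delta_u$ and $p - p_h = (p - \Pi_k^0 p) + (\Pi_k^0 p - p_h) =: \eta_p + \delta_p$, where $\bm\delta_u \in \VV_k$ and $\delta_p \in Q_k$ are the computable parts. The approximation terms $\bm\eta_u, \eta_p$ are controlled directly by Proposition \ref{prp:int VDG} and Lemma \ref{lm:bramble}, including their time derivatives (the Fortin operator and $\Pi_k^0$ commute with $\partial_t$ since they act only in space), so it remains to bound $\|(\bm\delta_u, \delta_p)(t)\|_{\mathcal{E}_h}$, using the norm equivalence \eqref{eq:stab4} to pass between $\mathcal{E}$ and $\mathcal{E}_h$.

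First I would derive the error equations. Subtracting Problem \ref{pb:wave_vem} from Problem \ref{pb:wave_weak} tested against discrete functions, and inserting $\pm \Pi_k^F \bm u$, $\pm \Pi_k^0 p$, gives for all $\bm v_h \in \VV_k$, $q_h \in Q_k$:
\begin{align*}
m_h({\bm\delta_u}_t, \bm v_h) + b(\bm v_h, \delta_p) + r(\bm\delta_u, \bm v_h) &= m_h({\Pi_k^F \bm u}_t, \bm v_h) - m(\bm u_t, \bm v_h) + b(\bm v_h, \eta_p) + r(\bm\eta_u, \bm v_h), \\
n({\delta_p}_t, q_h) - b(\bm\delta_u, q_h) &= n({\Pi_k^0 p}_t, q_h) - n(p_t, q_h) - b(\bm\eta_u, q_h).
\end{align*}
The key simplifications: $b(\bm v_h, \eta_p) = (\diver \bm v_h, p - \Pi_k^0 p)_\Omega = 0$ since $\diver \bm v_h \in Q_k$; $b(\bm\eta_u, q_h) = (\diver(\bm u - \Pi_k^F \bm u), q_h)_\Omega = 0$ by the commuting-diagram property \eqref{eq:divpf}; and likewise $n({\Pi_k^0 p}_t - p_t, q_h) = (c^{-2}(\Pi_k^0 p_t - p_t), q_h)_\Omega = 0$ because $c^{-2} q_h$ is piecewise polynomial of degree $k$. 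For the first right-hand side, the consistency property \eqref{eq:cons} lets me replace $m_h({\Pi_k^0 \bm u_t}, \bm v_h)$ by $m(\Pi_k^0 \bm u_t, \bm v_h)$, but the Fortin interpolant is not $\Pi_k^0 \bm u$; instead I write $m_h({\Pi_k^F \bm u}_t, \bm v_h) - m(\bm u_t, \bm v_h) = m_h({\Pi_k^F \bm u}_t - \Pi_k^0 \bm u_t, \bm v_h) + m(\Pi_k^0 \bm u_t - \bm u_t, \bm v_h)$, using \eqref{eq:cons} on the first term, and both pieces are bounded by $\|\bm v_h\|_{0,\Omega}$ times an $O(h^{k+1})$ quantity via \eqref{eq:stab3}, Proposition \ref{prp:int VDG} and Lemma \ref{lm:bramble}. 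The term $r(\bm\eta_u, \bm v_h)$ I would handle with a trace/scaled-trace inequality on edges, yielding again an $O(h^{k+1})$ bound (alternatively, note $r \geq 0$ contributes with favorable sign on the left once we test with $\bm v_h = \bm\delta_u$).

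Next, choose the test functions $\bm v_h = \bm\delta_u$ and $q_h = \delta_p$ and add the two equations. The crucial cancellation is $b(\bm\delta_u, \delta_p) - b(\bm\delta_u, \delta_p) = 0$, leaving
\[
\frac{1}{2}\frac{d}{dt}\Bigl( m_h(\bm\delta_u, \bm\delta_u) + \|c^{-1}\delta_p\|_{0,\Omega}^2 \Bigr) + r(\bm\delta_u, \bm\delta_u) = \mathcal{R}(t),
\]
where $\mathcal{R}(t)$ collects the right-hand terms above, all bounded by $C h^{k+1}\bigl(|\bm u_t|_{k+1,\Omega_h} + \widehat c\,|p_t|_{k+1,\Omega_h}\bigr)\,\|(\bm\delta_u,\delta_p)(t)\|_{\mathcal{E}_h}$ using Cauchy–Schwarz, \eqref{eq:stab3} and \eqref{eq:stab4}. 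Dropping the nonnegative term $r(\bm\delta_u,\bm\delta_u)$ and writing $y(t) := \|(\bm\delta_u,\delta_p)(t)\|_{\mathcal{E}_h}$, this is $\frac{d}{dt} y(t)^2 \leq 2 g(t) y(t)$ with $g(t) = C h^{k+1}(|\bm u_t|_{k+1,\Omega_h} + \widehat c\,|p_t|_{k+1,\Omega_h})$, hence by the standard Gronwall-type lemma $y(t) \leq y(0) + \int_0^t g(s)\,ds$. Finally, $y(0) = \|(\Pi_k^F \bm u_0 - \Pi_k^0 \bm u_0,\, \Pi_k^0 p_0 - \Pi_k^0 p_0)\|_{\mathcal{E}_h} = \|\Pi_k^F \bm u_0 - \Pi_k^0 \bm u_0\|$ in the $m_h$-seminorm, which by \eqref{eq:stab3}, Proposition \ref{prp:int VDG} and Lemma \ref{lm:bramble} is $O(h^{k+1}|\bm u_0|_{k+1,\Omega_h})$ (insert $\pm \bm u_0$); and $\int_0^t g(s)\,ds = C h^{k+1}\bigl(|\bm u_t|_{L^1(0,t;H^{k+1}(\Omega_h))} + \widehat c\,|p_t|_{L^1(0,t;H^{k+1}(\Omega_h))}\bigr)$. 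Combining $\bm\delta, \delta_p$ with the interpolation errors $\bm\eta_u, \eta_p$ via the triangle inequality and the fundamental theorem of calculus applied to $\eta_p$ (to write $|p_0|_{k+1}$ and the $L^1$ norm of $|p_t|_{k+1}$, since $\eta_p(t) = \eta_p(0) + \int_0^t {\eta_p}_t$) yields the stated estimate. The main obstacle is the careful bookkeeping of the consistency error of $m_h$ against the fact that the natural interpolant here is $\Pi_k^F$ rather than $\Pi_k^0$, i.e. controlling $m_h({\Pi_k^F \bm u}_t - \Pi_k^0 \bm u_t, \bm v_h)$; everything else is routine once the error equations and cancellations are set up.
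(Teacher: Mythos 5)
Your overall strategy coincides with the paper's proof: the same splitting of the error via the Fortin interpolant $\Pi_k^F$ and the projection $\Pi_k^0$, the same error equation with the $b$-terms cancelling through orthogonality and the commuting property \eqref{eq:divpf}, the same splitting $m_h(\Pi_k^F\uu_t-\Pi_k^0\uu_t,\cdot)+m(\Pi_k^0\uu_t-\uu_t,\cdot)$ via \eqref{eq:cons}, the same Gronwall-type integration, and the same treatment of the initial datum and of the interpolation error through the fundamental theorem of calculus. Your observation that $n((\Pi_k^0 p - p)_t,q_h)=0$ because $c$ is piecewise constant on $\Omega_h$ is correct and even slightly cleaner than the paper, which keeps this term and bounds it by $\widehat c\,h^{k+1}\vert p_t\vert_{k+1,\Omega_h}$; this changes nothing in the final estimate, since the $p_t$ contribution reappears anyway through $\|c^{-1}(p-\Pi_k^0 p)(t)\|_{0,\Omega}$.

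The one step that does not hold up as written is the Robin consistency term $r(\uu-\Pi_k^F\uu,\bm{\delta}_u)$. Your parenthetical alternative (the sign of $r$) concerns the other term, $r(\bm{\delta}_u,\bm{\delta}_u)$, which indeed sits on the left with a favorable sign but does nothing for the term on the right. The trace-inequality route cannot deliver the claimed rate: a scaled trace estimate costs a factor $h^{-1/2}$ (and would require an $H^1$-type bound for the Fortin error, which is not among the tools recalled in the paper), so even after absorbing $\|c^{1/2}\bm{\delta}_u\cdot\nn\|_{0,\Gamma_R}$ into $r(\bm{\delta}_u,\bm{\delta}_u)$ by Young's inequality you would only obtain $O(h^{k+1/2})$, half an order short of the statement whenever $\Gamma_R\neq\emptyset$. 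The correct (and the paper's) argument is that this term vanishes identically: by \eqref{eq:divpf} the normal trace of $\uu-\Pi_k^F\uu$ on each edge $e$ is $L^2(e)$-orthogonal to $\Pk_k(e)$, while $\bm{\delta}_u\cdot\nn|_e\in\Pk_k(e)$ and $c$ is constant on each boundary element, hence $r(\uu-\Pi_k^F\uu,\bm{\delta}_u)=0$ --- the same orthogonality you already invoke for the $b$-terms. With this replacement your plan reproduces the paper's proof.
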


\begin{proof}
For all $t \in (0,T)$, let us introduce the following error quantities
\[
\begin{aligned}
{\bm e}_{I}(t) &:=
\uu(t) - \Pi_k^F\uu(t)\,,
\qquad  &
\rho_{I}(t) &:= p(t)- \Pi_k^0 p(t)\,,
\\
{\bm e}_h(t) &:= \Pi_k^F\uu(t) - \uu_h(t)\,,
\qquad  &
\rho_h(t) &:= \Pi_k^0 p(t) - p_h(t)\,,
\end{aligned}
\]
From triangle inequality and \eqref{eq:stab4} it holds
\begin{equation}\label{eq:errore_0}
\|(\bm u - \bm u_h, p-p_h)(t)\|_\mathcal{E}
\lesssim \|({\bm e}_{I}, \rho_{I})(t)\|_\mathcal{E} +
\|({\bm e}_h, \rho_h)(t)\|_{\mathcal{E}_h}
 \,.
\end{equation}
The first term on the right-hand side of \eqref{eq:errore_0} can be bounded by using Proposition \ref{prp:int VDG} and Lemma \ref{lm:bramble} getting
\begin{equation}\label{eq:stima_errore_1}
\begin{aligned}
\|({\bm e}_{I}, \rho_{I})(t)\|_\mathcal{E}
&\lesssim
h^{(k+1)}  \left( | \bm u(t)|_{k+1, \Omega_h}  + \widehat{c}\, | p(t)|_{k+1, \Omega_h} \right)
\\
& \lesssim
h^{k+1}
\left( | \bm u_0|_{k+1, \Omega_h}  + \widehat{c}\,| p_0|_{k+1, \Omega_h} +
\int_0^t \left( | \uu_t(s)|_{k+1, \Omega_h}  +
\widehat{c}\,| p_t(s)|_{k+1, \Omega_h} \right) \, {\rm d}s
 \right)
\\
& =
h^{k+1}
\bigl( | \bm u_0|_{k+1, \Omega_h}  + \widehat{c}\,| p_0|_{k+1, \Omega_h} +
| \uu_t|_{L^1(0, t; H^{k+1}(\Omega_h))}  +
\widehat{c}\,| p_t|_{L^1(0, t; H^{k+1}(\Omega_h))}
 \bigr) \,.
\end{aligned}
\end{equation}
To estimate the second term on the right-hand side of \eqref{eq:errore_0} we proceed as follows. We consider Problem \ref{pb:wave_weak} and Problem \ref{pb:wave_vem}
and obtain
\[
\begin{aligned}
 m(\uu_t, \bm{e}_h) + b(\bm{e}_h, p) + r(\bm u, \bm{e}_h) +
 n(p_t, \rho_h) - b(\bm{u}, \rho_h) &= F(\rho_h),
\\
 m_h({\bm{u}_h}_t, \bm{e}_h) + b(\bm{e}_h, p_h) + r(\uu_h, \bm{e}_h) + n({p_h}_t, \rho_h) - b(\bm{u}_h, \rho_h) &= F(\rho_h).
\end{aligned}
\]
Then subtracting the previous equations we get the following error equation
\begin{equation}\label{eq:error_equation}
m(\uu_t, \bm{e}_h) -  m_h({\bm{u}_h}_t, \bm{e}_h) +
n((p  - p_h)_t, \rho_h) +
r(\bm u - \bm u_h, \bm{e}_h) +
\eta_b   = 0 \,,
\end{equation}
where
\[
\eta_b := b(\bm{e}_h, p - p_h) + b(\bm{u}_h-\bm{u}, \rho_h).
\]
First, we notice that the term $\eta_b$ vanishes, in fact
\begin{equation}
\label{eq:eta_b}
\begin{aligned}
\eta_b   & =
b(\Pi^F_k \bm u, p - p_h) - b(\bm u_h, p - p_h) +
b(\bm{u}_h, \Pi^0_k p - p_h) - b(\bm{u}, \Pi^0_k p - p_h)
\\
& =
b(\Pi^F_k \bm u, p - p_h) +
b(\bm{u}, p_h - \Pi^0_k p ) +
b(\bm{u}_h, \Pi^0_k p - p)
\\
&
\begin{aligned}
&=
b(\Pi^F_k \bm u, p - p_h) +
b(\bm{u}, p_h - \Pi^0_k p )
\quad &
& \text{($\diver \uu_h \in \Pk_k(\Omega_h)$ \& def. $\Pi^0_k$)}
\\
& =
b(\Pi^F_k \bm u, p - p_h) +
b(\Pi^F_k \bm{u}, p_h - \Pi^0_k p )
\quad &
& \text{($ p_h - \Pi^0_k p \in \Pk_k(\Omega_h)$ \&  \eqref{eq:divpf})}
\\
& =
b(\Pi^F_k \bm{u}, p - \Pi^0_k p )
\\
& =
0.
\quad &
& \text{($\diver (\Pi^F_h\uu) \in \Pk_k(\Omega_h)$ \& def. $\Pi^0_k$)}
\end{aligned}
\end{aligned}
\end{equation}
Therefore, from \eqref{eq:error_equation} and \eqref{eq:eta_b} we infer
\begin{equation}\label{eq:conv1}
m_h({\bm{e}_h}_t, \bm{e}_h) +
n({\rho_h}_t, \rho_h) +
r(\bm e_h, \bm{e}_h)
=
(m_h(\Pi^F_k(\uu_t), \bm{e}_h) -  m(\uu_t, \bm{e}_h)) -
n({\rho_{I}}_t, \rho_h)  -
r(\bm e_{{I}}, \bm{e}_h) \,.
\end{equation}
Recalling that $(\bm{e}_h \cdot \nn^e)|_e \in \Pk_k(e)$ for all $e \in \Sigma_h$, and employing \eqref{eq:divpf} we have
\begin{equation}
\label{eq:conv2}
r(\bm e_{I}, \bm{e}_h) =
r(\bm u - \Pi^F_k \bm u, \bm{e}_h) = 0 \,.
\end{equation}
Therefore, since $r(\bm e_h, \bm{e}_h) \geq 0$, from \eqref{eq:conv1} and \eqref{eq:conv2}, and recalling definition \eqref{def:energy_norm_dis} we obtain
\begin{equation}
\label{eq:conv3}
\frac{1}{2}\frac{d}{dt}\Vert (\bm e_h, \bm e_h) \Vert_{\mathcal{E}_h}^2
\leq
(m_h(\Pi^F_k(\uu_t), \bm{e}_h) -  m(\uu_t, \bm{e}_h)) -
n({\rho_{I}}_t, \rho_h) =: \eta_{\uu} + \eta_p \,.
\end{equation}
The first term on the right-hand side of the previous equation can be bounded as follows
\begin{equation}\label{eq:conv4}
\begin{aligned}
\eta_{\uu} &=
m_h(\Pi^F_k(\uu_t) - \Pi^0_k(\uu_t), \bm{e}_h) +  m(\Pi^0_k(\uu_t) - \uu_t, \bm{e}_h)
& & \text{(by \eqref{eq:cons})}
\\
&\lesssim
\left(
\Vert\Pi^F_k(\uu_t) - \Pi^0_k(\uu_t) \Vert_{0, \Omega} +
\Vert \uu_t - \Pi^0(\uu_t) \Vert_{0, \Omega}
\right)
\Vert \bm e_h \Vert_{0, \Omega}
& & \text{(by \eqref{eq:stab3})}
\\
&\lesssim
\left(
\Vert \uu_t - \Pi^F_k(\uu_t) \Vert_{0, \Omega} +
\Vert \uu_t - \Pi^0(\uu_t) \Vert_{0, \Omega}
\right)
\Vert \bm e_h \Vert_{0, \Omega}
& & \text{(by tri. ineq.)}
\\
& \lesssim
h^{k+1} \,\vert \uu_t\vert_{H^{k+1}(\Omega_h)}
m_h(\bm e_h, \bm e_h)^{1/2}
\end{aligned}
\end{equation}
where in the last inequality we used Proposition \ref{prp:int VDG},
Lemma \ref{lm:bramble} and equation \eqref{eq:stab3}.
Whereas $\eta_p$ is estimated by
\begin{equation}
\label{eq:conv5}
\begin{aligned}
\eta_{\uu} =
n(\Pi^0(p_t) - p_t, \rho_h)
& \leq
\|c^{-1}(\Pi^0(p_t) - p_t) \|_{0, \Omega}
\|c^{-1} \rho_h \|_{0, \Omega}
\\
&\lesssim
\widehat{c} \,h^{k+1} \,
\vert p_t\vert_{H^{k+1}(\Omega_h)}
\|c^{-1} \rho_h \|_{0, \Omega} \,.
\end{aligned}
\end{equation}
Therefore recalling definition \eqref{def:energy_norm_dis}, from
\eqref{eq:conv3},  \eqref{eq:conv4} and \eqref{eq:conv5} and the Cauchy-Schwarz inequality,
 we obtain
\begin{equation*}
\frac{d}{dt}\Vert (\bm e_h, \rho_h)(t) \Vert_{\mathcal{E}_h}
\lesssim
h^{k+1} \left( \vert \uu_t(t)\vert_{H^{k+1}(\Omega_h)} +
\widehat{c} \,
\vert p_t(t)\vert_{H^{k+1}(\Omega_h)} \right) \,.
\end{equation*}
By integrating the previous bound on $(0,t)$ we obtain
\begin{equation}
\label{eq:conv7}
\begin{aligned}
\Vert (\bm e_h, \rho_h)(t) \Vert_{\mathcal{E}_h}
& \lesssim
\Vert (\bm e_h, \rho_h)(0) \Vert_{\mathcal{E}_h}
+
h^{k+1} \left( \int_0^t \vert \uu_t(s)\vert_{H^{k+1}(\Omega_h)} \,{\rm d}s +
\widehat{c}
\int_0^t \vert p_t(s)\vert_{H^{k+1}(\Omega_h)} \, {\rm d}s\right)
\\
& \lesssim
\Vert (\bm e_h, \rho_h)(0) \Vert_{\mathcal{E}} +
h^{k+1} \left( \Vert \uu_t\Vert_{L^1(0,t; H^{k+1}(\Omega_h))} +
\widehat{c} \,
\Vert p_t\Vert_{L^1(0,t; H^{k+1}(\Omega_h))}\right)
\,.
\end{aligned}
\end{equation}
We finally bound the initial data error
\begin{equation}
\label{eq:conv8}
\begin{aligned}
\Vert (\bm e_h, \rho_h)(0) \Vert_{\mathcal{E}}
&\lesssim
\Vert \Pi^F_k \uu_0 - \Pi^0_k \uu_0 \Vert_{0, \Omega}
+
\widehat{c}
\Vert p_0 - \Pi^0_k p_0 \Vert_{0, \Omega}
\\
&\lesssim
h^{k+1} \left( \vert \uu_0\vert_{H^{k+1}(\Omega_h)} +
\widehat{c} \,
\vert p_0\vert_{H^{k+1}(\Omega_h)}\right)  \,.
\end{aligned}
\end{equation}
The thesis now follows combining \eqref{eq:conv7}, \eqref{eq:conv8} and \eqref{eq:stima_errore_1} in \eqref{eq:errore_0}.
\end{proof}

\subsection{Energy error}
\label{sub:energy}

An important aspect in wave propagation problems is the energy conservation of the continuous and semi-discrete system (cf.  \eqref{eq:cons-cont} and \eqref{eq:cons-sdisc} rispectively).
In the following proposition we estimate the errors between the energy of the continuous and the semi-discrete energy of the system.

\begin{prop}
\label{prp:enf}
Under assuption \cfan{(A1)}, let $(\uu, p)$ be the solution of Problem \ref{pb:wave_weak} and
$(\uu_h, p_h)$ be the solution of Problem \ref{pb:wave_vem} with $f=0$ and $\Gamma_R=\emptyset$.
Assume that
$\uu_0, \, p_0 \in H^{k+1}(\Omega_h)$.
Then for all $t \in (0,T)$, the following  estimate holds:
\begin{equation*}
\begin{aligned}
0 \leq \|(\bm u , p)(t)\|_\mathcal{E}^2 -
\|(\bm u_h, p_h)(t)\|_{\mathcal{E}_h}^2
\lesssim
h^{2(k+1)}
\bigl( | \bm u_0|_{k+1, \Omega_h}^2  + \widehat{c}^2\,| p_0|_{k+1, \Omega_h}^2 \bigr) \,,
\end{aligned}
\end{equation*}
where $\widehat{c} := \Vert c^{-1} \Vert_{L^{\infty}(\Omega \times (0,T))}$.
\end{prop}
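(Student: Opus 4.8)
The plan is to exploit that, under the hypotheses $f=0$ and $\Gamma_R=\emptyset$, \emph{both} the continuous and the semi-discrete problems conserve energy, so that the left-hand side at a generic time $t$ collapses onto a purely algebraic quantity depending only on the initial data and its $L^2$-projection. Concretely, I would first invoke the continuous energy identity \eqref{eq:cons-cont} and the semi-discrete one \eqref{eq:cons-sdisc}, together with the fact that $(\Pi^0_k\uu_0,\Pi^0_k p_0)\in[\Pk_k(\Omega_h)]^2\times Q_k$, so that by \eqref{eq:cons} one has $\|(\Pi^0_k\uu_0,\Pi^0_k p_0)\|_{\mathcal{E}_h}=\|(\Pi^0_k\uu_0,\Pi^0_k p_0)\|_{\mathcal{E}}$. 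This yields, for every $t\in(0,T)$,
\[
\|(\uu,p)(t)\|_{\mathcal{E}}^2-\|(\uu_h,p_h)(t)\|_{\mathcal{E}_h}^2
=\|(\uu_0,p_0)\|_{\mathcal{E}}^2-\|(\Pi^0_k\uu_0,\Pi^0_k p_0)\|_{\mathcal{E}}^2 \,.
\]

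Next, I would expand the right-hand side using the definition \eqref{def:energy_norm} and split it elementwise. On each $E\in\Omega_h$ the weight $c^{-1}$ is constant, since the mesh matches the subdivision defining the piecewise-constant velocity $c$, so it can be factored out of the local $L^2(E)$-norms; the Pythagorean identity for the $L^2$-orthogonal projection $\Pi^0_k$ then gives the exact representation
\[
\|(\uu_0,p_0)\|_{\mathcal{E}}^2-\|(\Pi^0_k\uu_0,\Pi^0_k p_0)\|_{\mathcal{E}}^2
=\|\uu_0-\Pi^0_k\uu_0\|_{0,\Omega}^2+\|c^{-1}(p_0-\Pi^0_k p_0)\|_{0,\Omega}^2\,,
\]
whose non-negativity is immediate, settling the lower bound in the statement.

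Finally, for the upper bound I would apply the Bramble--Hilbert estimate (Lemma~\ref{lm:bramble}) with $s=k+1$ to each of the two terms, using $\|c^{-1}(p_0-\Pi^0_k p_0)\|_{0,\Omega}\le\widehat{c}\,\|p_0-\Pi^0_k p_0\|_{0,\Omega}$ for the pressure contribution; this produces exactly $h^{2(k+1)}\bigl(|\uu_0|_{k+1,\Omega_h}^2+\widehat{c}^2|p_0|_{k+1,\Omega_h}^2\bigr)$ and concludes the proof. There is no substantial obstacle here: the only point needing care is the reduction to the initial instant via the two conservation identities, and the observation that on a $c$-adapted mesh the weight is constant on each element, so the Pythagorean cancellation goes through for the $\|c^{-1}\cdot\|_{0,\Omega}$-part as well.
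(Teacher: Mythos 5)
Your proposal is correct and follows essentially the same route as the paper's own proof: reduction to the initial data via the two conservation identities \eqref{eq:cons-cont} and \eqref{eq:cons-sdisc}, the elementwise Pythagorean identity for $\Pi^0_k$ (exploiting that $c$ is piecewise constant on $\Omega_h$), and Lemma~\ref{lm:bramble} for the upper bound. No gaps.
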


\begin{proof}.
From \eqref{eq:cons-cont} and \eqref{eq:cons-sdisc} we infer
\[
\begin{aligned}
\|(\bm u , p)(t)\|_\mathcal{E}^2 -
\|(\bm u_h, p_h)(t)\|_{\mathcal{E}_h}^2  & =
 \|(\bm u_0 , p_0)\|_\mathcal{E}^2 -
\|(\Pi^0_k \bm u_0, \Pi^0_k p_0)\|_{\mathcal{E}}^2
\\
& =
\Vert \bm u_0 \Vert_{0,  \Omega}^2 +
\Vert \bm c^{-1}p_0 \Vert_{0,  \Omega}^2 -
\Vert \bm \Pi^0_k \bm u_0 \Vert_{0,  \Omega}^2 -
\Vert \bm c^{-1} \Pi^0_k p_0 \Vert_{0,  \Omega}^2 \,.
\end{aligned}
\]
Direct application of Pythagorean Theorem yields
\[
\Vert \zeta \Vert_{0,  \Omega}^2  =
\Vert \Pi^0_k \zeta  \Vert_{0,  \Omega}^2 +
\Vert  \zeta - \Pi^0_k \zeta  \Vert_{0,  \Omega}^2 \,,
\qquad
\forall \zeta \in L^2(\Omega) \,,
\]
therefore, recalling that $c$ is piece-wise constant w.r.t. $\Omega_h$, we infer
\begin{equation}
\label{eq:enf}
\begin{aligned}
\|(\bm u , p)(t)\|_\mathcal{E}^2 -
\|(\bm u_h, p_h)(t)\|_{\mathcal{E}_h}^2  &=
\Vert  \bm u_0 - \Pi^0_k \bm u_0  \Vert_{0,  \Omega}^2 +
\Vert  c^{-1}(p_0 - \Pi^0_k p_0)  \Vert_{0,  \Omega}^2 \,.
\end{aligned}
\end{equation}
Then, from \eqref{eq:enf} and Lemma \ref{lm:bramble}, the following bounds hold
\[
\begin{aligned}
\|(\bm u , p)(t)\|_\mathcal{E}^2 -
\|(\bm u_h, p_h)(t)\|_{\mathcal{E}_h}^2  & \geq 0 \,,
\\
\|(\bm u , p)(t)\|_\mathcal{E}^2 -
\|(\bm u_h, p_h)(t)\|_{\mathcal{E}_h}^2  & \lesssim
h^{2(k+1)}
\bigl( | \bm u_0|_{k+1, \Omega_h}^2  + \widehat{c}^2\,| p_0|_{k+1, \Omega_h}^2 \bigr) \,.
\end{aligned}
\]
\end{proof}

\section{Time integration}\label{sec:time_integration}

In the light of energy conservation \eqref{eq:cons-cont} and \eqref{eq:cons-sdisc} and the energy error estimate in Proposition \ref{prp:enf}, it is crucial to understand how the energy is preserved or not under time-stepping schemes.

In the present section we formulate a fully discrete version of Problem \ref{pb:wave_vem} aiming at preserving the energy of the system.
Therefore, we introduce a sequence of time steps $t_n = n \tau$, $n=0,\dots,N$, with time step size $\tau$.
Next, we define $\vv_{h, \tau}^n := \vv_h(\cdot, t_n)$
(resp. $q_{h, \tau}^n := q_h(\cdot, t_n)$)
as the approximation of the function $\vv_h(\cdot, t) \in \VV_k$
(resp. $q_h(\cdot, t) \in Q_k$ )
at time~$t_n$, $n=0,\dots,N$.
To integrate in time Problem \ref{pb:wave_vem}, we take under consideration the family of $\theta$-method schemes and analyse their energy conservation properties.
The fully discrete systems consequently reads as follows:

\noindent
Given $(\uu_{h, \tau}^0, p_{h, \tau}^0) = (\Pi^0_k \uu_0, \Pi^0_k p_0)$, find
$(\uu_{h, \tau}^n, p_{h, \tau}^n)$ for $n=0, \dots, N$
s.t.
\begin{align}\label{tetametodo}
       \left \{
        \begin{aligned}
            m_h\left(\frac{\bm{u}_{h,\tau}^{n+1}-\bm{u}_{h,\tau}^{n}}{\tau}, \bm{v}_h \right) +
            b(\bm{v}_h, \theta p_{h,\tau}^{n+1}+(1-\theta)p_{h,\tau}^n) +
             r(\theta\bm{u}_{h,\tau}^{n+1} + (1-\theta)\bm u_{h,\tau}^n, \bm{v}_h)  = 0
            \qquad \forall \vv_h \in \VV_k,
            \\
          n\left(\frac{{p}_{h,\tau}^{n+1}-p_{h,\tau}^n}{\tau},q_h\right) - b(\theta\bm{u}_{h,\tau}^{n+1}+(1-\theta)\bm u_{h,\tau}^n, q_h)
            = \theta F^{n+1}(q_h) + (1-\theta) F^{n}(q_h)
            \qquad \quad \,\,\,  \forall q_h \in Q_k,
        \end{aligned}
         \right.
\end{align}
where $F^{n}(q_h) := (f(t_n),q_h)_\Omega$ and $\theta \in [0,1]$. It is well known that all $\theta$-methods are first order accurate in time, except for $\theta=1/2$, i.e., the Crank-Nicolson method, which is second order accurate.

To analyse the energy conservation of the proposed schemes we consider null forcing terms, i.e. $f=0$, and $\Gamma_R=\emptyset$. Thus, the above system reduces to
\begin{align}\label{eq:teta-method_energy}
       \left \{
        \begin{aligned}
            & m_h\left(\frac{\bm{u}_{h,\tau}^{n+1}-\bm{u}_{h,\tau}^{n}}{\tau}, \bm{v}_h \right) +
            b(\bm{v}_h, \theta p_{h,\tau}^{n+1}+(1-\theta)p_{h,\tau}^n)   = 0
            \quad &\forall \vv_h \in \VV_k,
            \\
            & n\left(\frac{{p}_{h,\tau}^{n+1}-p_{h,\tau}^n}{\tau},q_h\right) - b(\theta\bm{u}_{h,\tau}^{n+1}+(1-\theta)\bm u_{h,\tau}^n, q_h) =
            0 \quad   &\forall q_h \in Q_k \,.
        \end{aligned}
         \right.
\end{align}
We start by considering in \eqref{eq:teta-method_energy}
\[
\bm v_h = \theta\bm{u}_{h,\tau}^{n+1}+(1-\theta)\bm u_{h,\tau}^n  \in \VV_k,
\qquad \text{and} \qquad
q_h = \theta p_{h,\tau}^{n+1}+(1-\theta)p_{h,\tau}^n \in Q_k,
\]
and by summing together the two above equation we get
\begin{equation*}
m_h\left(\frac{\bm{u}_{h,\tau}^{n+1}-\bm{u}_{h,\tau}^{n}}{\tau}, \theta\bm{u}_{h,\tau}^{n+1}+(1-\theta)\bm u_{h,\tau}^n \right) + n\left(\frac{{p}_{h,\tau}^{n+1}-p_{h,\tau}^n}{\tau},\theta p_{h,\tau}^{n+1}+(1-\theta)p_{h,\tau}^n\right)
 = 0,
\end{equation*}
that is
\begin{equation*}
 \theta \Vert(\bm{u}_{h,\tau}^{n+1}, p_{h,\tau}^{n+1})\Vert_{\mathcal{E}_h}^2 +
 (1 -2\theta) \left(m_h(\bm{u}_{h,\tau}^{n+1},\bm u_{h,\tau}^{n}) + n(p_{h,\tau}^{n+1},p_{h,\tau}^{n})\right)
=
(1-\theta) \Vert(\bm{u}_{h,\tau}^{n}, p_{h,\tau}^{n})\Vert_{\mathcal{E}_h}^2
\,,
\end{equation*}
which,  rearranging the terms, is
\begin{equation}\label{eq:energy_discrete_eq}
\Vert(\bm{u}_{h,\tau}^{n+1}, p_{h,\tau}^{n+1})\Vert_{\mathcal{E}_h}^2
+
(2\theta - 1)
\Vert(\bm{u}_{h,\tau}^{n+1} - \bm{u}_{h,\tau}^{n}, p_{h,\tau}^{n+1}- p_{h,\tau}^{n})\Vert_{\mathcal{E}_h}^2
=
\Vert(\bm{u}_{h,\tau}^{n}, p_{h,\tau}^{n})\Vert_{\mathcal{E}_h}^2 \,.
\end{equation}
Now, it is easy to see that for $n=0, \dots, N$:
\begin{itemize}
    \item if $\theta = 1/2$, i.e., for the Crank-Nicolson method, the discrete energy is conserved, i.e.
\begin{equation*}
\Vert(\bm{u}_{h,\tau}^{n}, p_{h,\tau}^{n})\Vert_{\mathcal{E}_h}
=
\Vert(\bm{u}_{h,\tau}^{0}, p_{h,\tau}^{0})\Vert_{\mathcal{E}_h}
=
\Vert(\Pi^0_k\bm{u}_0, \Pi^0_k p_0)\Vert_{\mathcal{E}},
\end{equation*}
\item if $1/2 < \theta \leq 1 $, that is $2\theta - 1 >0$, the second term in \eqref{eq:energy_discrete_eq} is positive and then we can  obtain
\begin{equation*}
\Vert(\bm{u}_{h,\tau}^{n+1}, p_{h,\tau}^{n+1})\Vert_{\mathcal{E}_h}^2
\leq
\Vert(\bm{u}_{h,\tau}^{n}, p_{h,\tau}^{n})\Vert_{\mathcal{E}_h}^2 \,,
\end{equation*}
that means dissipation of energy, cf. \cite{Geveci1988},
\item if  $0 \leq \theta < \frac12$, that is $2\theta - 1 < 0$, the second term in \eqref{eq:energy_discrete_eq} is negative and then we have
\begin{equation}\label{eq:Energy_increase}
\Vert(\bm{u}_{h,\tau}^{n+1}, p_{h,\tau}^{n+1})\Vert_{\mathcal{E}_h}^2
\geq
\Vert(\bm{u}_{h,\tau}^{n}, p_{h,\tau}^{n})\Vert_{\mathcal{E}_h}^2 \,,
\end{equation}
that is the method produces a nondecreasing energy at each time step, in agreement with \cite{Glowinski2006}.
\end{itemize}

To obtain an explicit nearby energy-conservative scheme, one can consider, for example, the symplectic Euler scheme, that reduces to:
\begin{align}\label{sympEuler}
       \left \{
        \begin{aligned}
            & n\left(\frac{{p}_{h, \tau}^{n+1}-p_{h, \tau}^n}{\tau},q_h \right) - b(\bm{u}_{h, \tau}^n, q_h) = F^{n}(q_h)
            &\quad & \forall q_h \in Q_k,
            \\ & m_h\left(\frac{\bm{u}_{h, \tau}^{n+1}-\bm{u}_{h, \tau}^{n}}{\tau}, \bm{v}_h\right) + b(\bm{v}_h, p_{h, \tau}^{n+1}) + r(\bm{u}_{h, \tau}^{n}, \bm{v}_h)  = 0
            &\quad & \forall \vv_h \in \VV_k \,.
        \end{aligned}
         \right.
\end{align}
We refer the reader to \cite{Kirby2015} for the analysis.

\newcommand{\dE}{\text{d}E}
\newcommand{\Or}[1]{O\left(h^{#1}\right)}

\section{Numerical tests}
\label{sec:numExe}
In this section we provide some numerical examples to show the behaviour of the method and give numerical evidence of the theoretical results derived in the previous sections.
We consider three test cases: the first in Subsection \ref{subsec:numexe1}
presents the expected convergence rates for different approximation degree over several mesh families.
The second test case, given in Subsection \ref{subsec:numexe2},
is focused on the energy conservation properties of the scheme.
Finally, in Subsection~\ref{subsec:scattering},
we consider a wave propagation problem in a domain having with small curved inclusions. The latter
is handled by using the extension of VEM to curved edges (cf. Remark \ref{rm:3d}).

\subsection{Error decay}\label{subsec:numexe1}

In this first test case, we verify the  expected convergence rate of the method
for different mesh families:
\begin{itemize}
    \item \texttt{tria} a simplicial mesh;
    \item \texttt{quad} a Cartesian mesh;
    \item \texttt{hexa} composed by distorched hexagons;
    \item \texttt{voro} a mesh made of Voronoi cells optimized via a Lloyd algorithm.
\end{itemize}

In order to compute the VEM errors between the exact solution $(\uu, p)$ and
the VEM solution $(\uu_h, p_h)$ at the final time $T$, we consider the computable $L^2$-like error quantities,
\begin{gather*}
    e_{\bm{u}} \defeq \Vert \uu(T) - \Pi^0_k \uu_h(T) \Vert_{0, \Omega}
    \quad \text{and} \quad
    e_p  \defeq \Vert p(T) - \Pi^0_k p_h(T) \Vert_{0, \Omega}.\,
\end{gather*}
Let us set the mesh-size parameter
\begin{equation*}
    h := \frac{1}{L_P}\sum_{E\in\Omega_h} h_E\,.
\end{equation*}
For each family, we build a sequence of four meshes with decreasing mesh size $h$.  In Figure~\ref{meshUsedConv} we depict one mesh as representative of each family.

\begin{figure}[!htb]
    \centering
    \begin{tabular}{cc}
        \texttt{tria} &  \texttt{quad} \\
        \includegraphics[width=0.31\textwidth]{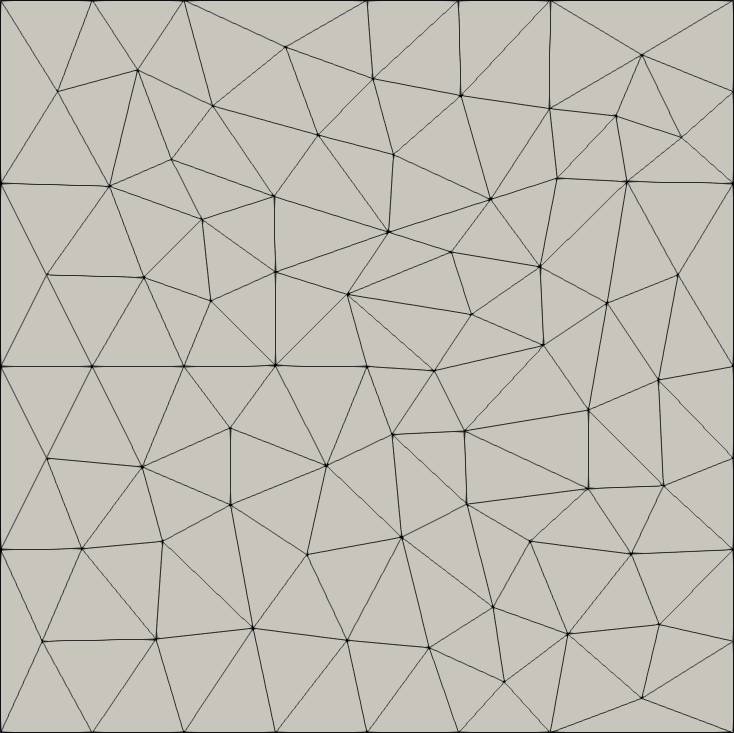}&
        \includegraphics[width=0.31\textwidth]{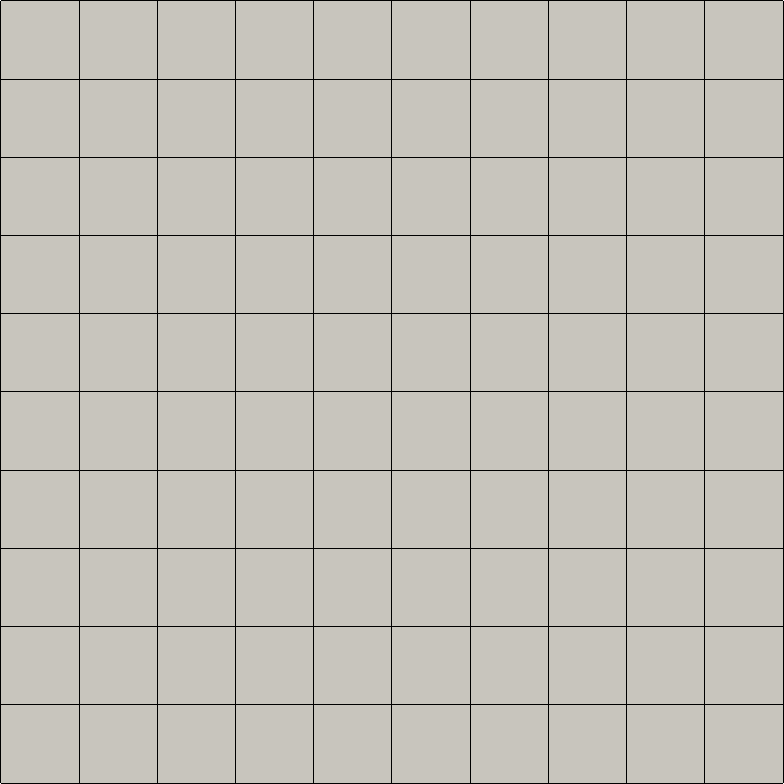}\\
        \texttt{hexa} &  \texttt{voro} \\
        \includegraphics[width=0.31\textwidth]{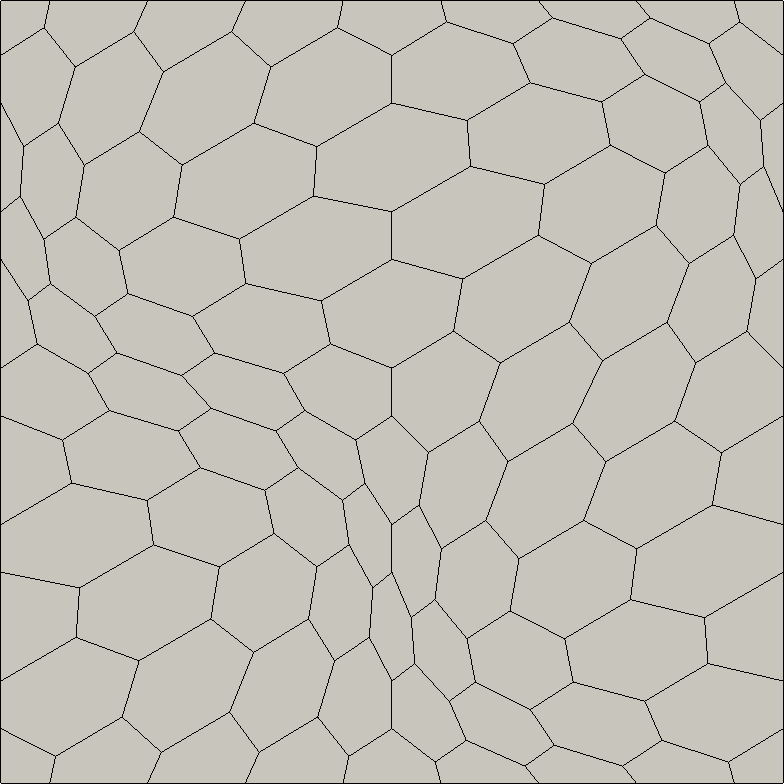}&
        \includegraphics[width=0.31\textwidth]{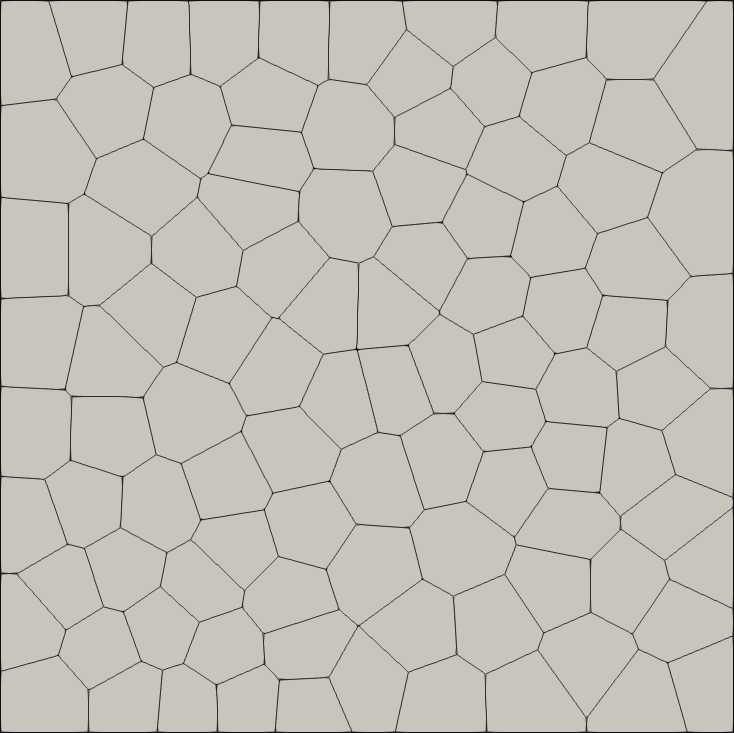}\\
    \end{tabular}
    \caption{{Example of the adopted polygonal meshes}. Example in Subsection~\ref{subsec:numexe1}.}
    \label{meshUsedConv}
\end{figure}

In accordance with Proposition \ref{thm:convergence},
the trend of each error indicator is computed and compared to the
expected convergence trend. We consider a problem on $\Omega = (0,1)^2$
with analytical solution given by
\[
\begin{aligned}
    \bm{u}(\bm{x}, t) &=
    \begin{bmatrix}
        -2\pi\cos(2\pi x)\cos(2\pi y)\sin(t)\\
        2\pi\sin(2\pi x)\sin(2\pi y)\sin(t)
    \end{bmatrix},
\\
    p(\bm{x}, t) &= \cos(2 \pi y) \sin(2 \pi x) \cos(t).\,
\end{aligned}
\]
We set $\Gamma_D=\partial \Omega$, $\Gamma_N=\Gamma_R=\emptyset$ and choose $c=1$. Boundary and initial conditions as well as the forcing term $f$ in Problem \ref{pb:wave_model} are computed accordingly.
Since we are considering the space discretization error, we set $T=1.e-7$ and $\tau=1.e-8$ with $\theta=1$ in \eqref{tetametodo}.

The computed errors are given in Figure~\ref{fig:case1_error_1} and~\ref{fig:case1_error_2}.
Such convergence lines are coherent with the estimates derived in the theoretical results,
see Proposition~\ref{thm:convergence}.
For high approximation degree and fine mesh size,
we notice stagnation of the error of the \texttt{hexa} and \texttt{voro} families.
This is an expected behaviour discussed in \cite{Berrone2017}.
A possible solution is to introduce a different basis for the polynomial expansion
that makes the local systems better conditioned. However, this is out of the scope of the current work and
it may be a good starting point for future investigations.

\begin{figure}[bt]
    \centering
    \includegraphics[width=0.825\textwidth]{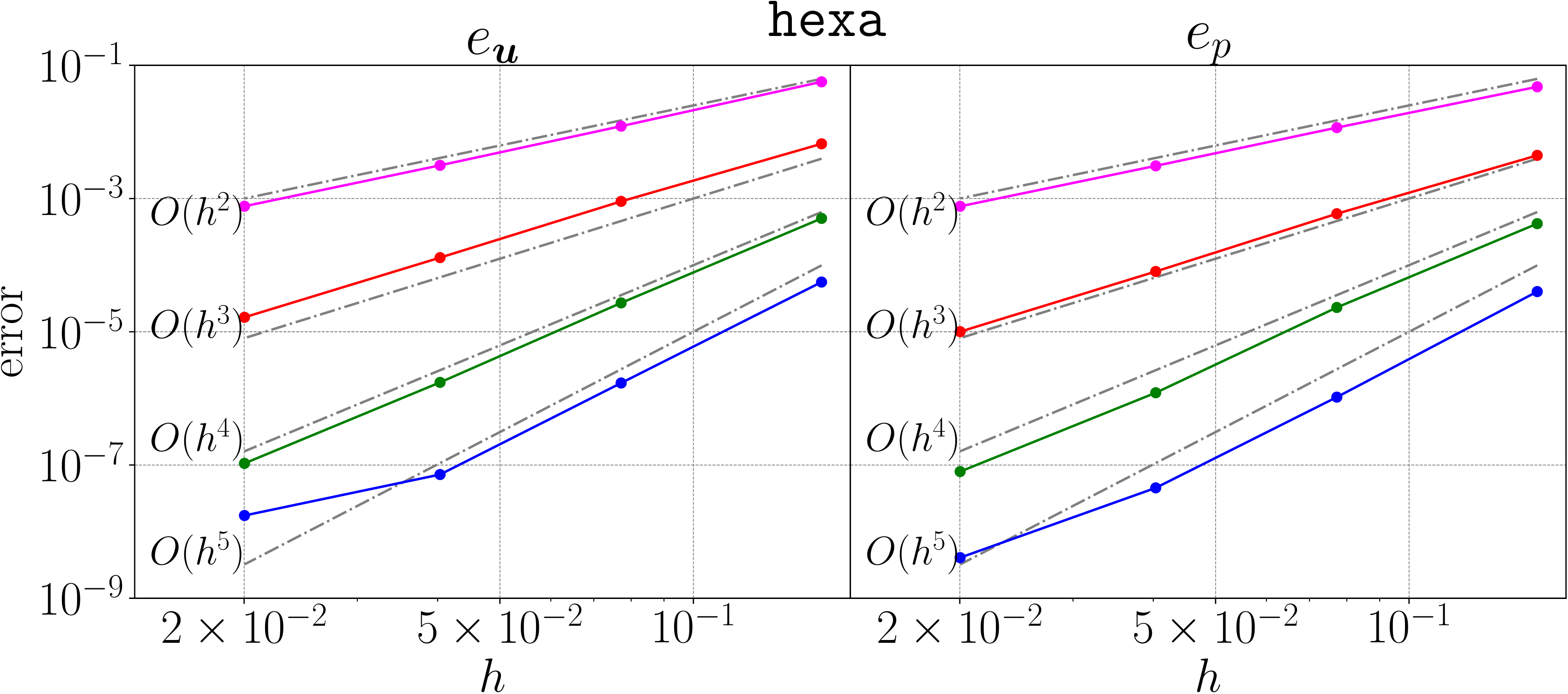}
    \includegraphics[width=0.825\textwidth]{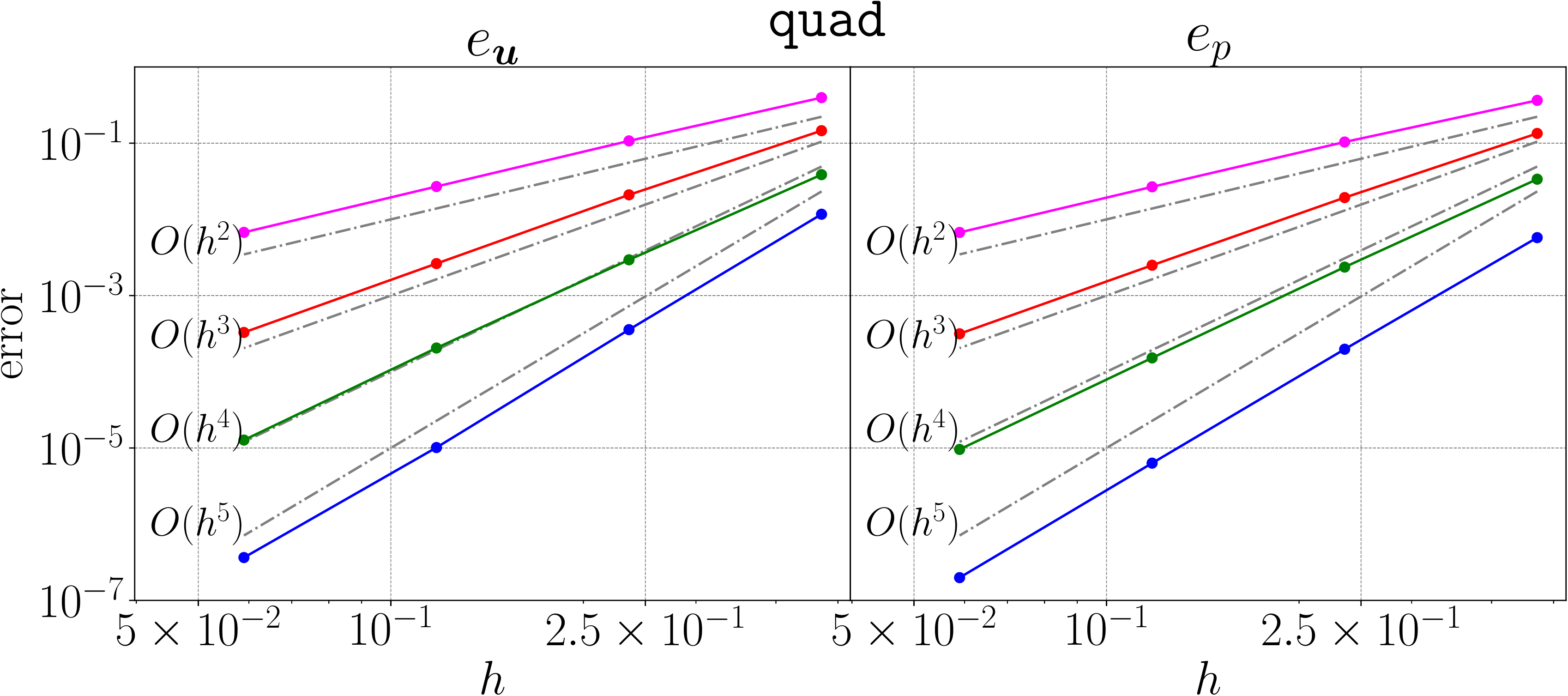}
    \includegraphics[width=0.75\textwidth]{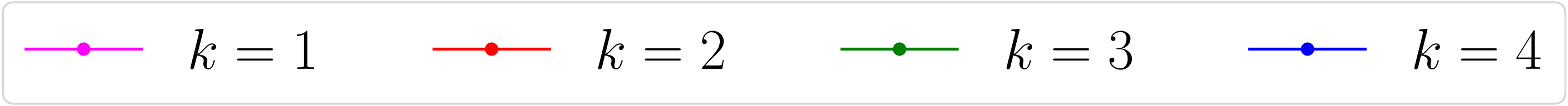}
    \caption{Error decay for different type of meshes. Example in Subsection \ref{subsec:numexe1}.}%
    \label{fig:case1_error_1}
\end{figure}

\begin{figure}[bt]
    \centering
    \includegraphics[width=0.825\textwidth]{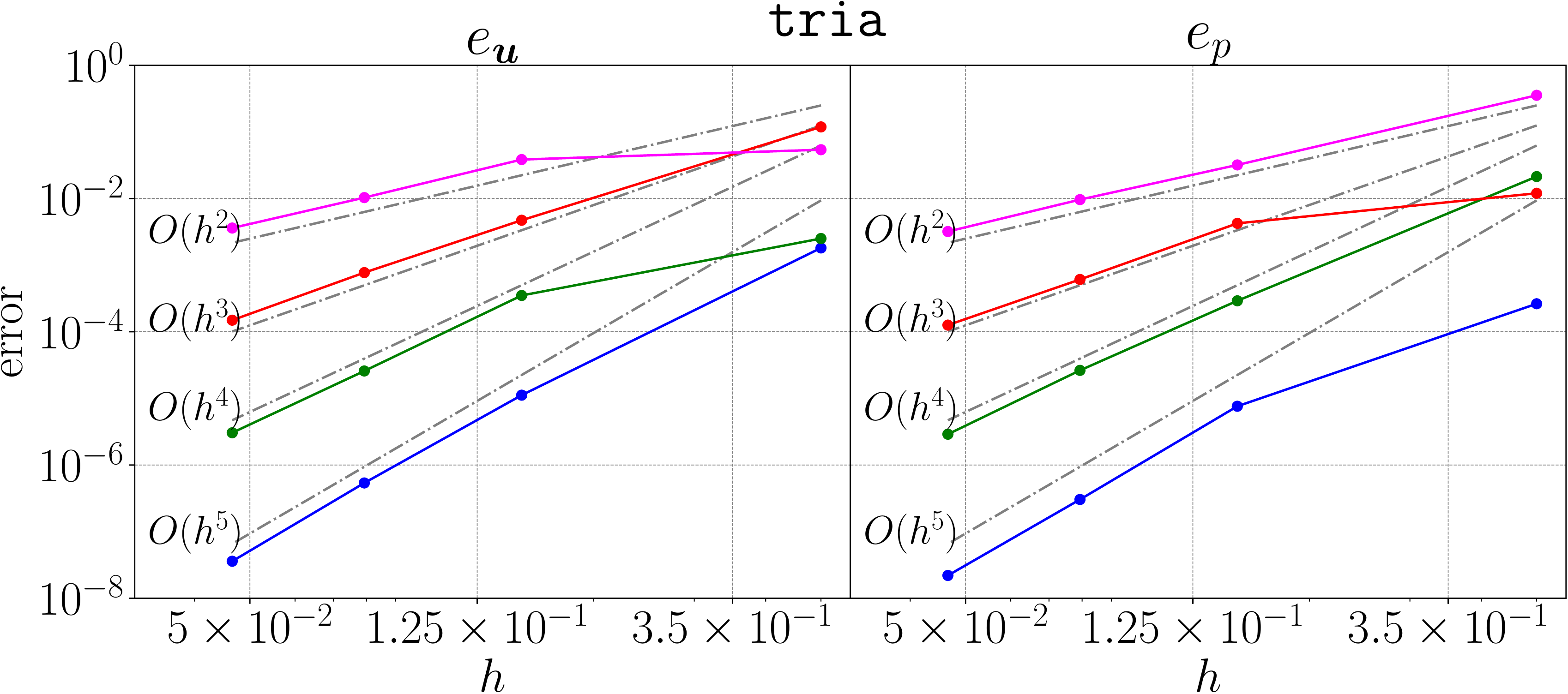}
    \includegraphics[width=0.825\textwidth]{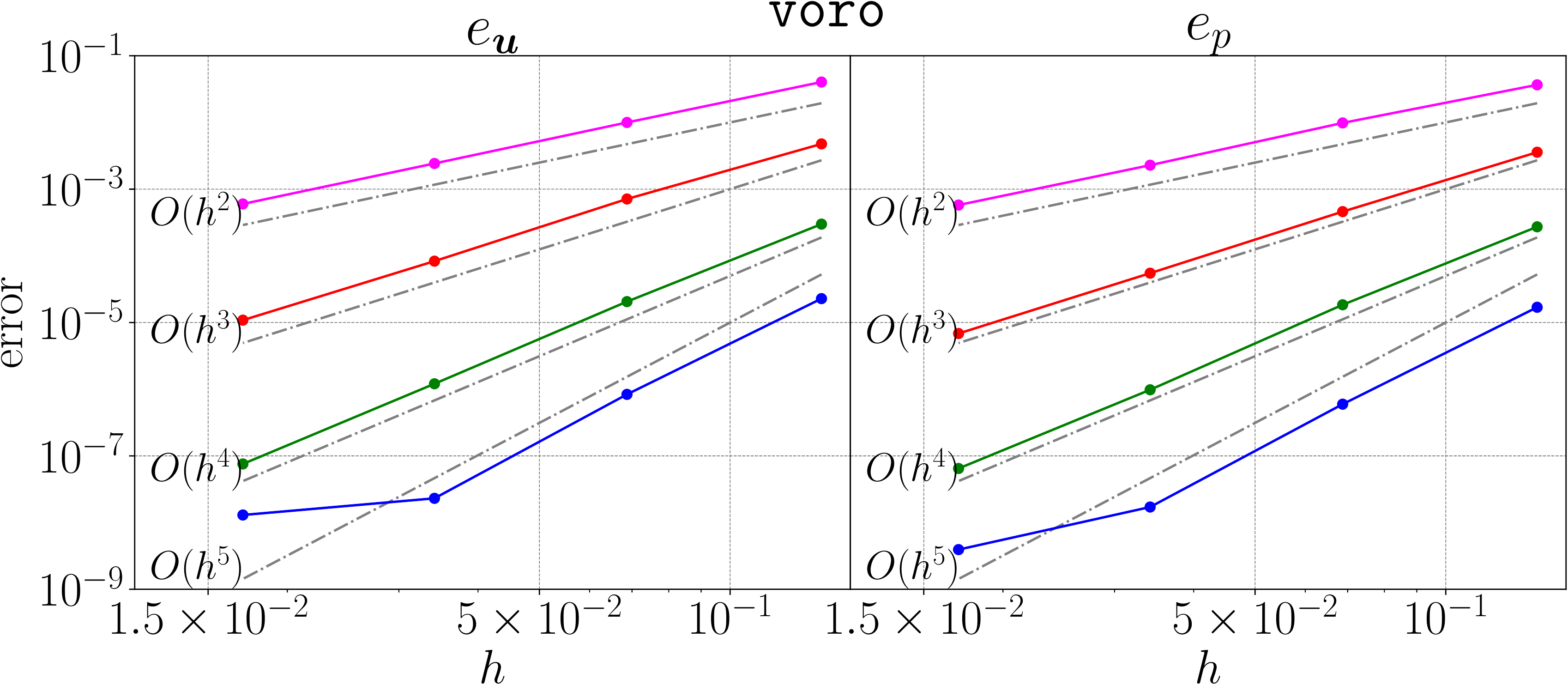}
    \includegraphics[width=0.75\textwidth]{fig/legend}
    \caption{Error decay for different type of meshes. Example in Subsection \ref{subsec:numexe1}.}%
    \label{fig:case1_error_2}
\end{figure}

\subsection{Conservation of energy}\label{subsec:numexe2}

In this second example, we test the conservation of energy given, for the semi-discrete system, in~\eqref{eq:cons-sdisc}.
We consider the same mesh families and approximation degrees of the previous example and
we choose four different time integration schemes:
Explicit Euler, i.e. $\theta = 0$ in \eqref{eq:teta-method_energy}, Implicit Euler, i.e. $\theta = 1$ in \eqref{eq:teta-method_energy} Crank-Nicolson, i.e. $\theta = \frac12$ in \eqref{eq:teta-method_energy}, and Symplectic Euler \eqref{sympEuler}.
 As observed in Section \ref{sub:vem problem}, the space discretization does locally conserve the energy so
the possible lack of conservation will be due to the temporal scheme.

The spatial domain is still the unit square, $\Gamma_D = [0,1] \times\{0,1\}$ i.e. the top and the bottom boundaries, $\Gamma_N = \{0,1\} \times [0,1]$ i.e. the left and right boundaries, and $\Gamma_R=\emptyset$.
The final time is $T=1$ and $\tau=1/200$.
We set to zero the source term, the characteristic velocity $c=1$ and the initial velocity and pressure as
\begin{gather*}
    \bm{u}(\bm{x}, 0) =
    \begin{bmatrix}
       \sin(x)\\
       \cos(y)
    \end{bmatrix},
    \quad \text{and} \quad
    p(\bm{x}, 0) = \cos(2 \pi y) \sin(2 \pi x)\,.
\end{gather*}

The conservation of energy is depicted in Figure~\ref{fig:case2_energy_2}.
In these graphs we compute the difference between the initial energy and the one at a specific time step and
we report such values multiplied by a factor of $10^4$.
We notice that for the Explicit Euler the energy tends to increase during time in agreement with \eqref{eq:Energy_increase}.
This trend becomes more evident for higher approximation degree.
For the Implicit Euler the situation has an opposite behaviour.
Indeed, the energy is now dissipated during time.
Also for this scheme, higher approximation degrees tend to dissipate more energy.
The semi-implicit Symplectic Euler scheme mitigates this effect:
the energy is not conserved during time, but the absolute error does not grow and stay limited.
Finally, Crank-Nicolson shows a perfect energy conservation property. To better appreciate this behaviour we report in Table \ref{tab:energy-error-CN} the values
\begin{equation}\label{eq:energy_error_cn}
    {\rm E} = \frac{\Vert(\bm{u}_{h,\tau}^T, p_{h,\tau}^T)\Vert_{\mathcal{E}_h}
-
\Vert(\bm{u}_{h,\tau}^{0}, p_{h,\tau}^{0})\Vert_{\mathcal{E}_h}}{\Vert(\bm{u}_{h,\tau}^{0}, p_{h,\tau}^{0})\Vert_{\mathcal{E}_h}},
\end{equation}
being $\Vert(\bm{u}_{h,\tau}^T, p_{h,\tau}^T)\Vert_{\mathcal{E}_h}$ the energy of the system at final time $T$.
\begin{table}[h!]
    \centering
    \begin{tabular}{|c|c|c|c|c|}
    \hline
      {\rm E}  & \texttt{tria} & \texttt{quad} & \texttt{hexa} & \texttt{voro}  \\
        \hline
        k = 1 & 7.4038e-16 & 8.6677e-15 & 2.0568e-15 & 9.0382e-15 \\
        k = 2 & 1.2555e-14 & 2.7043e-14 & 2.2665e-15 & 2.4104e-15\\
        k = 3 & 2.8832e-14 & 2.2205e-13 & 6.3020e-16 & 1.0358e-14\\
        k = 4 & 2.9968e-15 & 1.2107e-13 & 2.2041e-16 & 1.6068e-14 \\
        \hline
        \end{tabular}
    \caption{Energy errors {\rm E} computed as in \eqref{eq:energy_error_cn} for the Crank-Nicolson scheme in \eqref{eq:teta-method_energy}. }
    \label{tab:energy-error-CN}
\end{table}
The behaviours of these time discretisation schemes perfectly match the  arguments in Section \ref{sec:time_integration}.

\begin{figure}[!htb]
    \centering
    \includegraphics[width=0.9\textwidth]{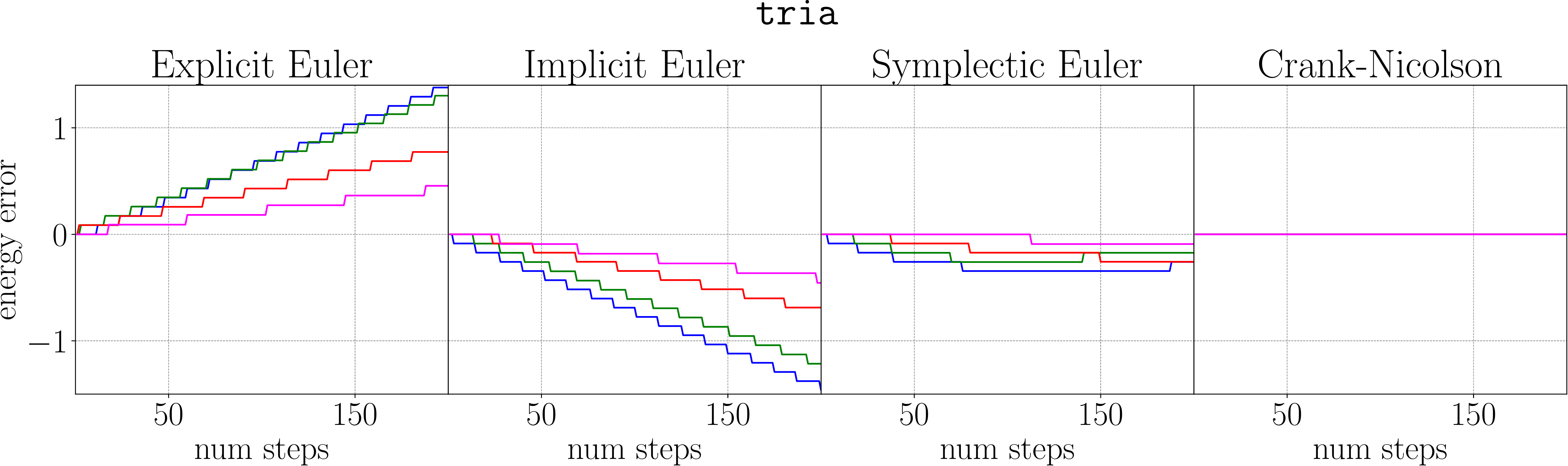}
    \includegraphics[width=0.9\textwidth]{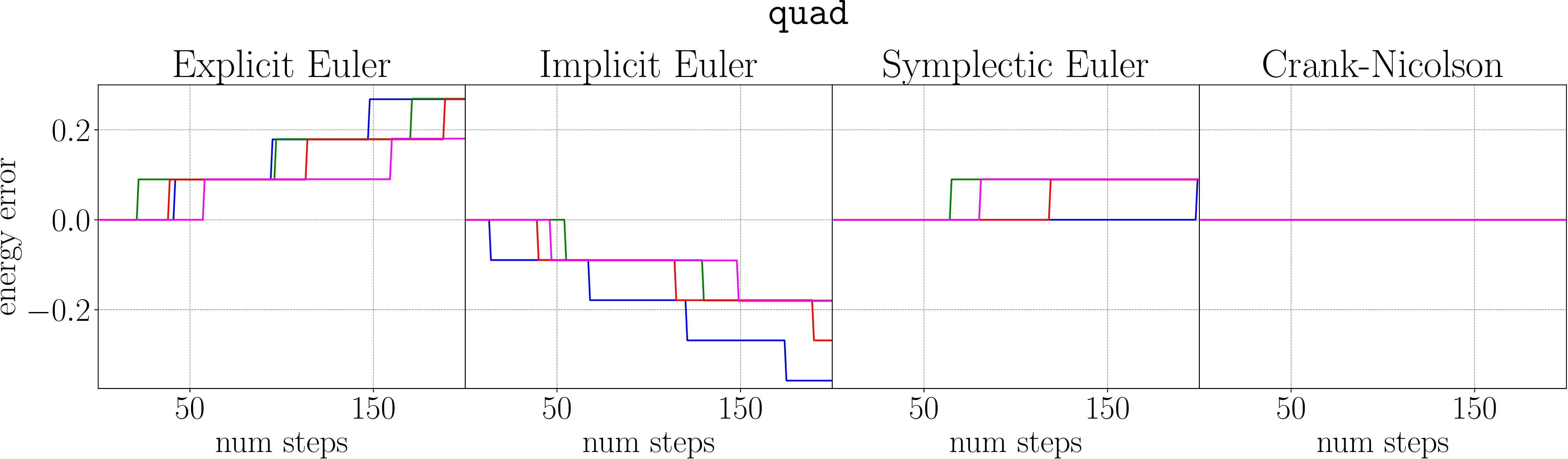}
    \includegraphics[width=0.9\textwidth]{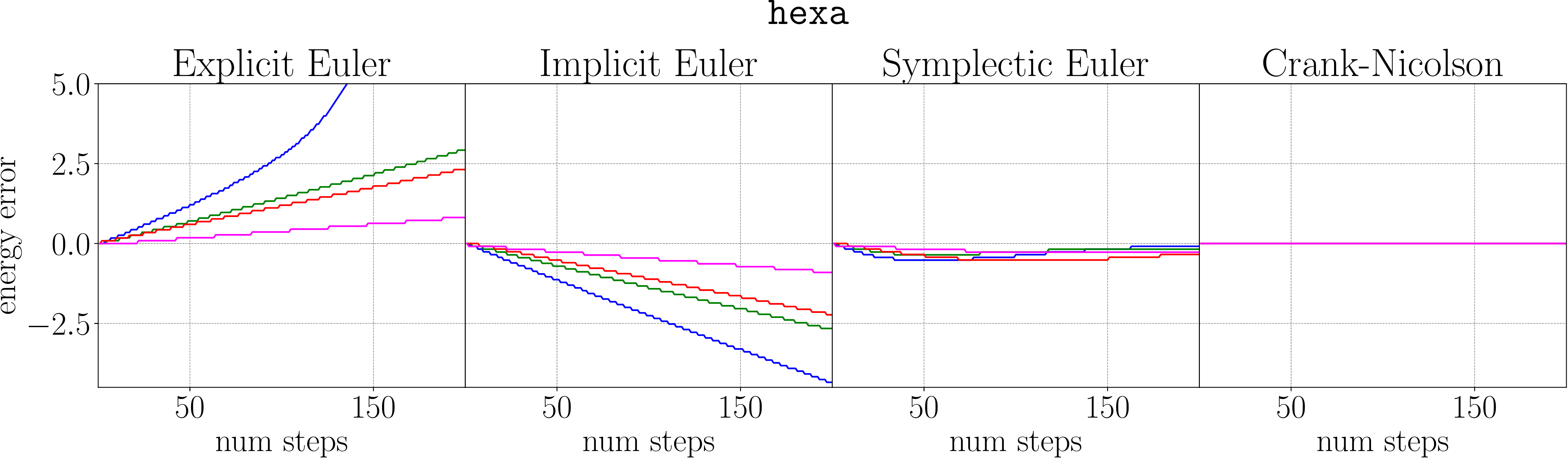}
    \includegraphics[width=0.9\textwidth]{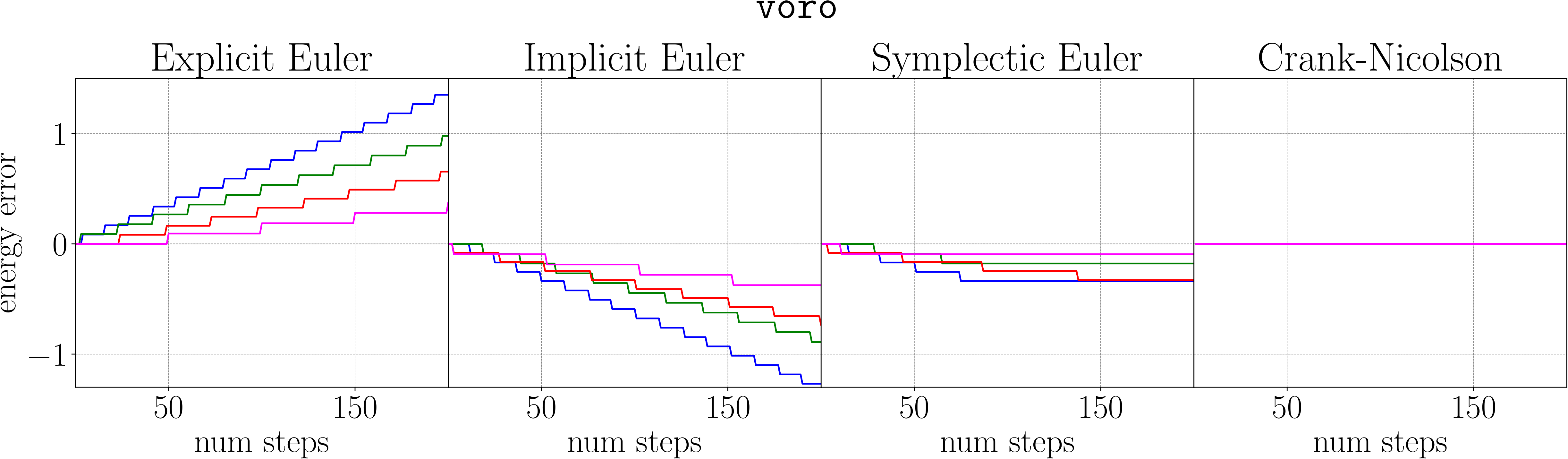}
    \includegraphics[width=0.75\textwidth]{fig/legend}
    \caption{Energy error for different schemes. For visualization purposes the values have been magnified by a factor $10^4$. Example in Subsection~\ref{subsec:numexe2}.}%
    \label{fig:case2_energy_2}
\end{figure}

\subsection{Multiple scattering in curved configurations}\label{subsec:scattering}

In this example, we show the qualitative behaviour of the solution computed by the proposed approach in a domain with multiple circular inclusions.
We consider two different configurations that are inspired by~\cite{barucq2021asymptotic}.
Both domain are defined in $\Omega=(0, 1)^2$, but
the first one has five holes whose centres are located at
\begin{gather*}
    \bm{x}_1 = [a\,,c]^\top, \quad
    \bm{x}_2 = [a\,,b]^\top, \quad
    \bm{x}_3 = [a\,,a]^\top, \quad
    \bm{x}_4 = [b\,,a]^\top, \quad
    \bm{x}_5 = [c\,,a]^\top,
\end{gather*}
while the second one has three additional circles centred at
\begin{gather*}
    \bm{x}_6 = [b\,,c]^\top, \quad
    \bm{x}_7 = [c\,,c]^\top, \quad
    \bm{x}_8 = [c\,,b]^\top,
\end{gather*}
where the values of the three parameters are given by
\begin{gather*}
    a = 0.6052631578947355\,,\quad
    b =0.5,
    \quad\text{and}\quad
    c=0.3947368421052622\,.
\end{gather*}
All circles have diameter 0.02. We refer to the first and the second case as \texttt{fiveHoles} and \texttt{eightHoles}, respectively.
The grid is build starting from a structured quadrilateral mesh,
where the cells intersecting with a circle are properly cut.
Moreover, since we exploit the possibility to include the geometry information within VEM spaces~\cite{dassi2021},
we do not need to over-refine edges to get an accurate representation of the circles themselves.
As a consequence, we can reduce the computational effort without losing the accuracy of the solution,
by neglecting spurious waves due to a piece-wise linear representation of the circles.

For both \texttt{fiveHoles} and \texttt{eightHoles} examples,
we build two meshes, see Figure~\ref{fig:holes}.
The first one starting from a square divided in $38\times 38$ uniform squares.
Then, the second one is constructed by refining 2 times only the quadrilateral elements of the first one,
see the detail in Figure~\ref{fig:holesDet}.

\begin{figure}[!htb]
    \centering
    \begin{tabular}{cc}
    \includegraphics[width=0.40\textwidth]{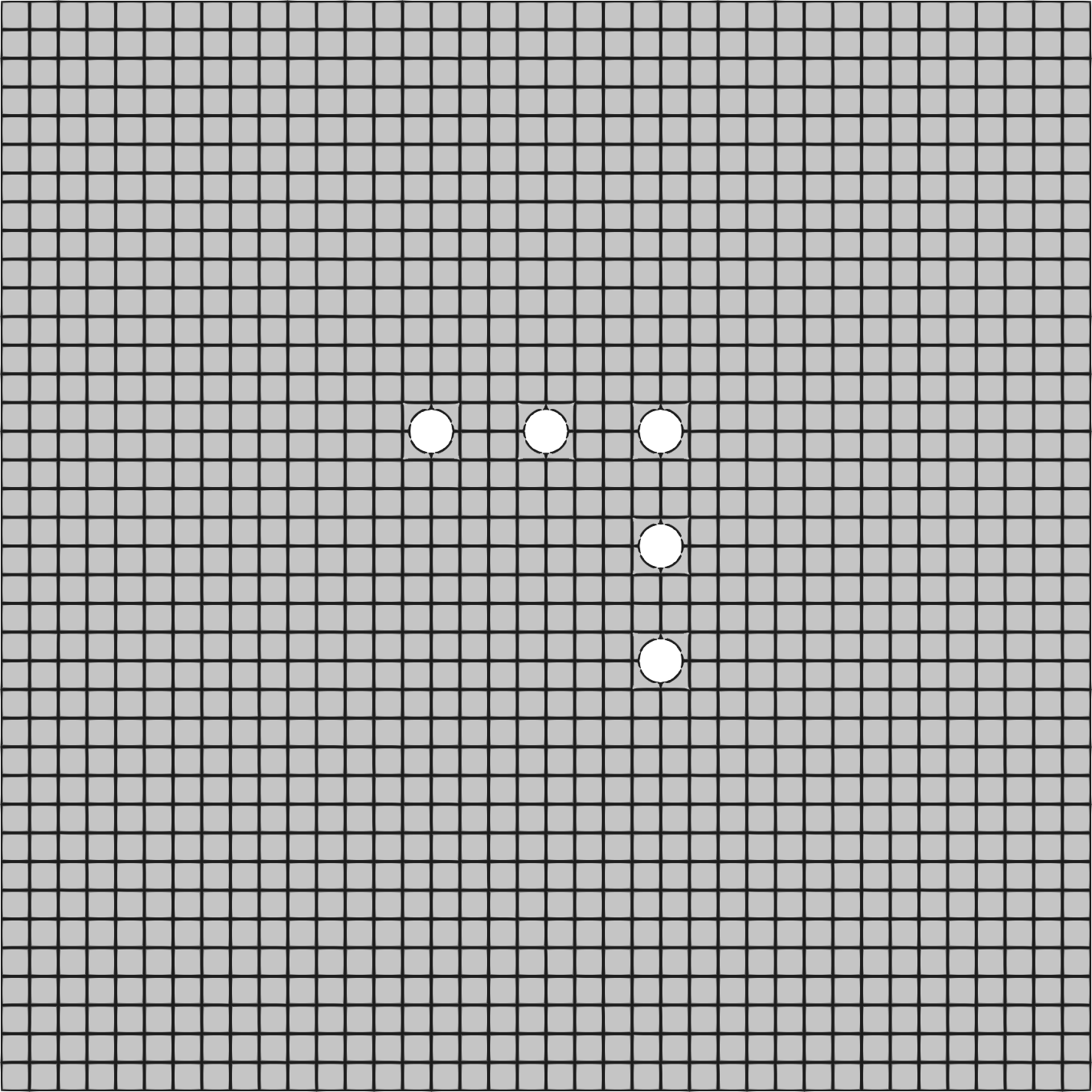}&
    \includegraphics[width=0.40\textwidth]{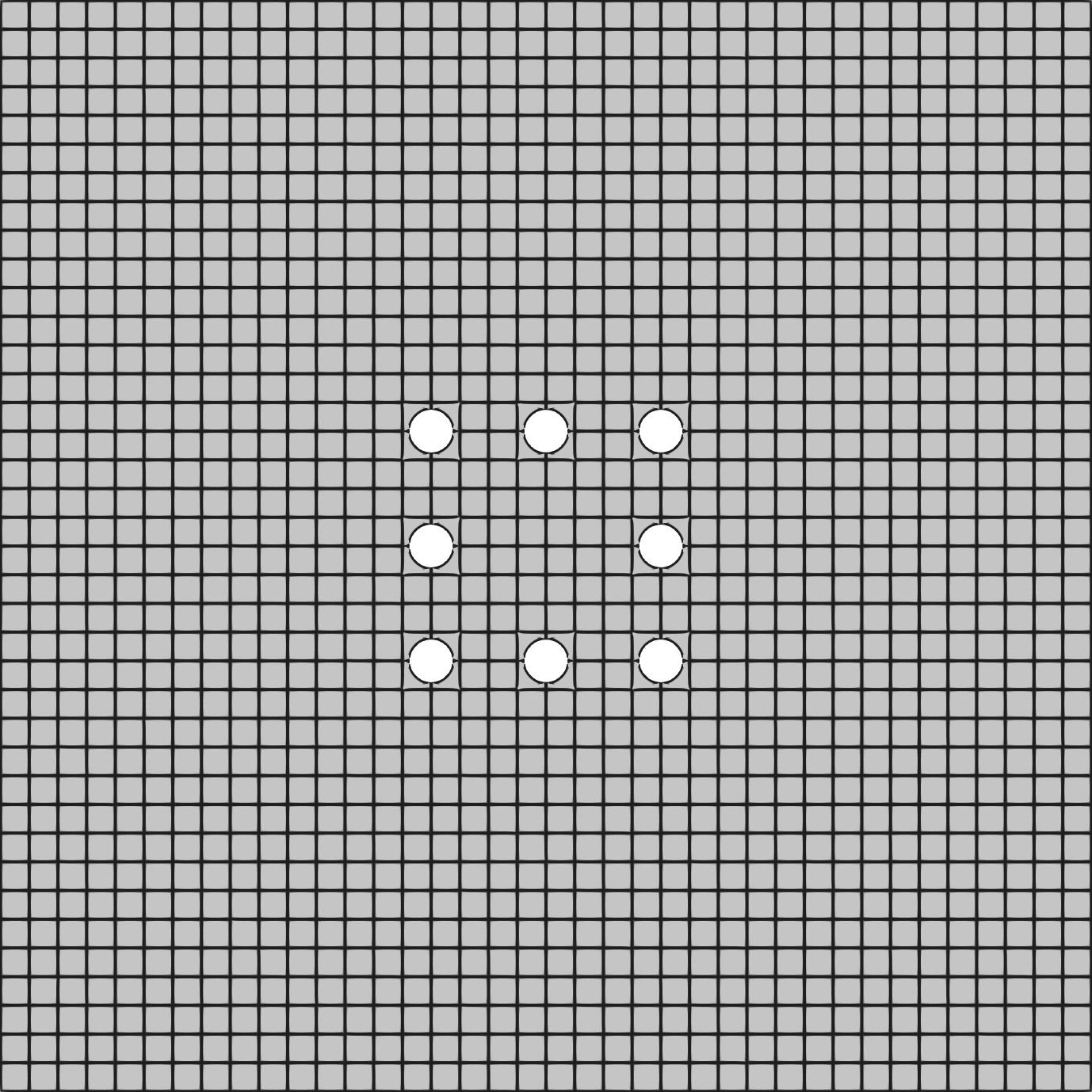}
    \end{tabular}
    \caption{Representation of the computational grid for \texttt{fiveHoles} (left)
    for \texttt{eightHoles} (right). Example in Subsection~\ref{subsec:scattering}.}
    \label{fig:holes}
\end{figure}

\begin{figure}[!htb]
    \centering
    \begin{tabular}{cc}
    \includegraphics[width=0.40\textwidth]{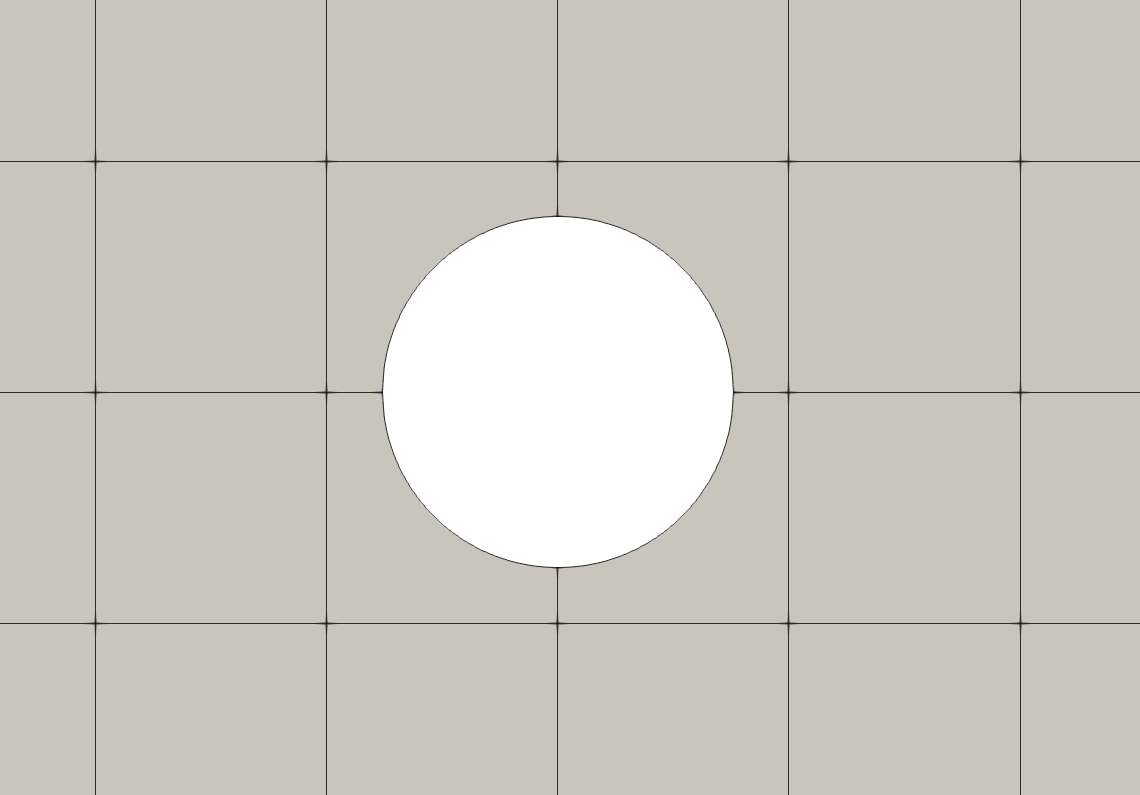}&
    \includegraphics[width=0.40\textwidth]{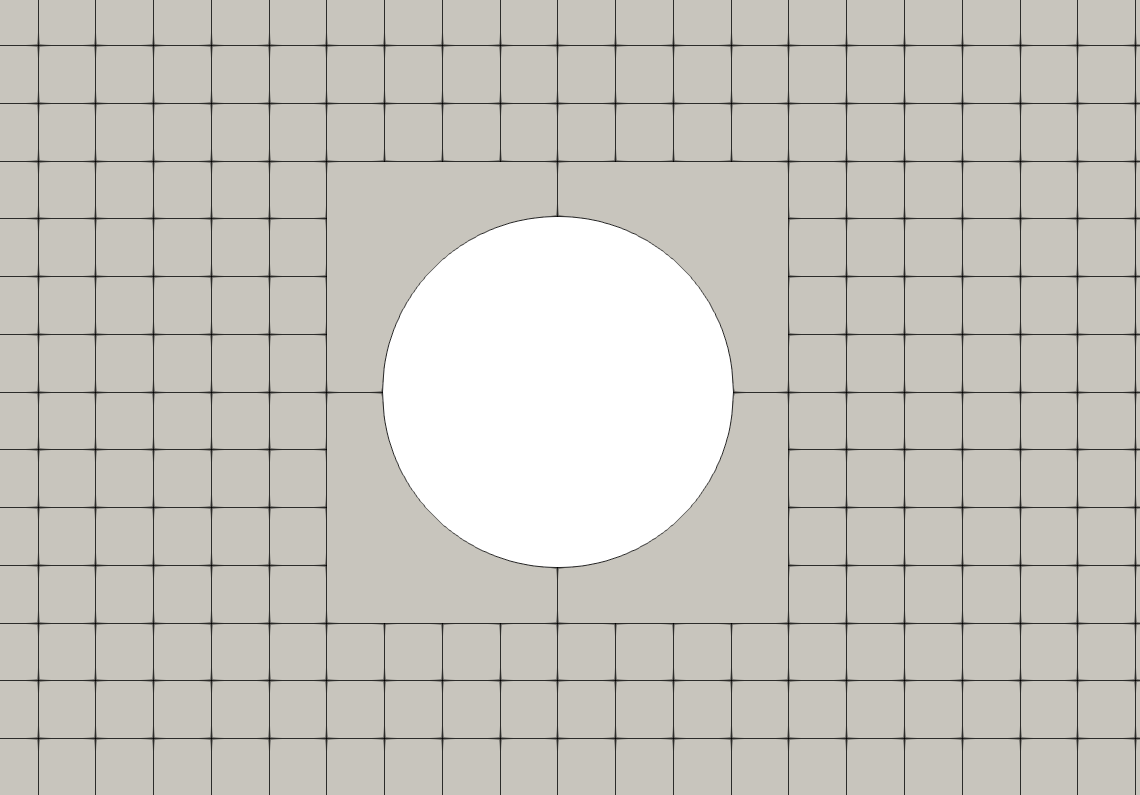}
    \end{tabular}
    \caption{Detail of one hole in the inital mesh (left) and in the refined one (right). Example in Subsection~\ref{subsec:scattering}.}
    \label{fig:holesDet}
\end{figure}

Along the external boundaries of $\Omega$, we impose absorbing boundary conditions, while on
all the small circles perimeters we use homogeneous Neumann conditions.
Then, we consider $c=1$ and the following source term
\begin{gather*}
    f(\bm{x}, t) = 10.\,e^{-(t-1)^2/\sigma_1} e^{-\norm{\bm{x}-\bm{x}_c}^2/\sigma_2},
\end{gather*}
with $\sigma_1 = 0.01$ and $\sigma_2 = 0.00125$ and $\bm{x}_c=[0.5, 0.5]^\top$.
The initial values of both $\bm{u}$ and $p$ are set to zero.
We assume approximation degree equal to $k=2$.
As final time we take $T=10$, we divide the interval $[0,T]$ into 50 equally spaced sub-intervals and we use the Crank-Nicolson time integration scheme.

In Figure~\ref{exe5Diff} we show the computed pressure solution $p_h$ evaluated at quadrature points at different time steps for the \texttt{fiveHoles} case.
On the top panels we show the coarse mesh while,
on the bottom ones, we consider the fine mesh.
From a qualitative point of view we observe no much difference between them.
\begin{figure}[tbp]
    \centering
    \setlength{\tabcolsep}{0.01\textwidth}
    \begin{tabular}{ccc}
        step 13 & step 26 & step 42\\
        \includegraphics[width=0.31\textwidth]{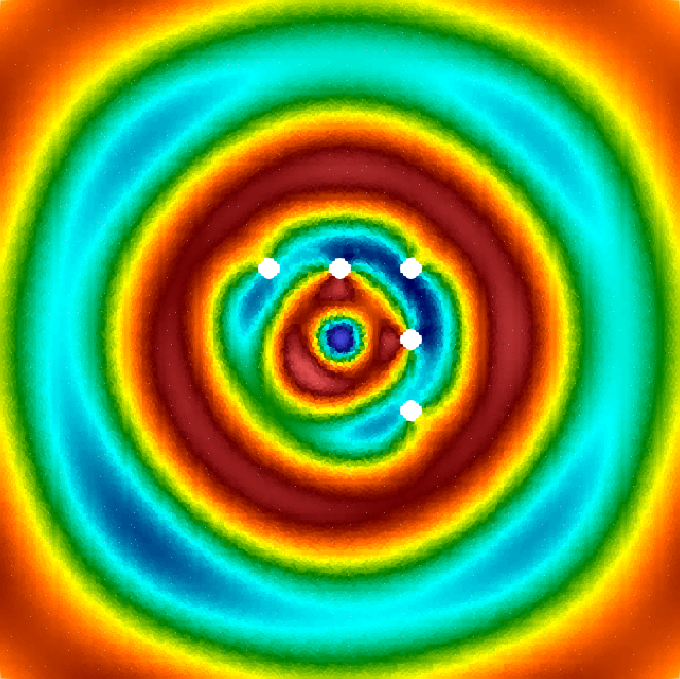}&
        \includegraphics[width=0.31\textwidth]{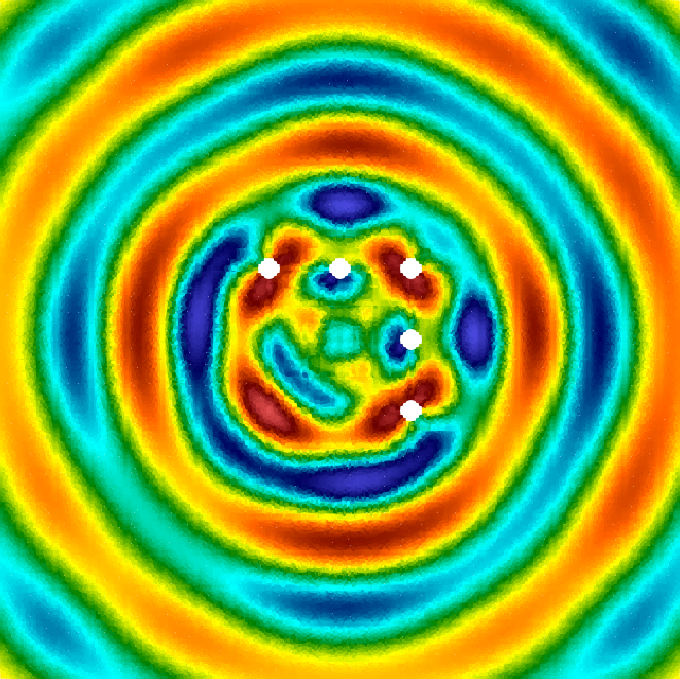}&
        \includegraphics[width=0.31\textwidth]{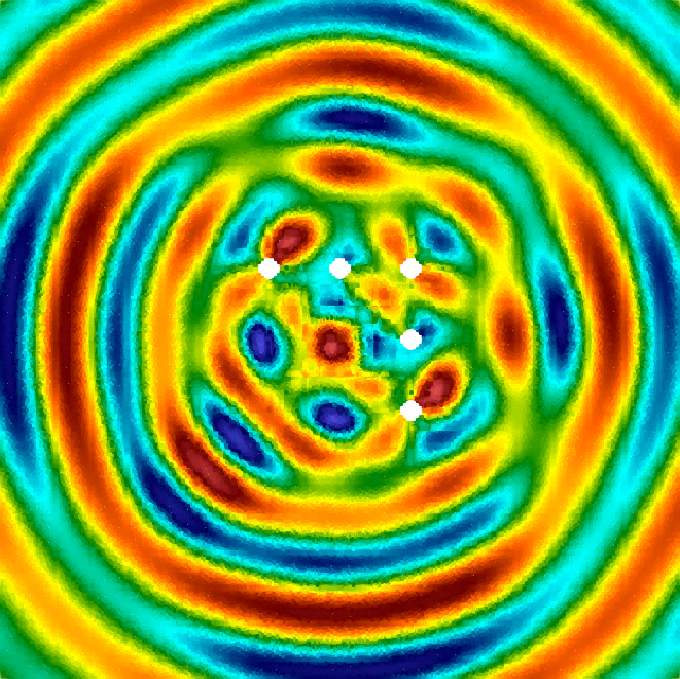}\\[0.01\textwidth]
        \includegraphics[width=0.31\textwidth]{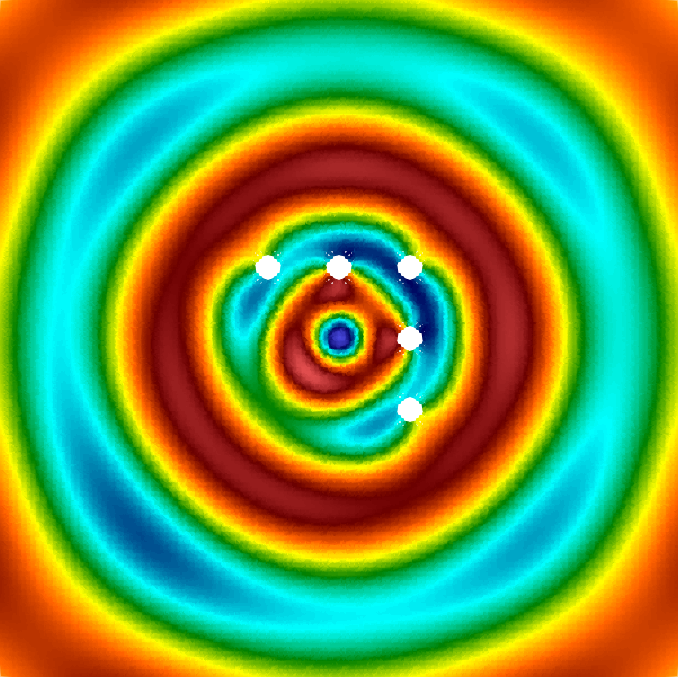}&
        \includegraphics[width=0.31\textwidth]{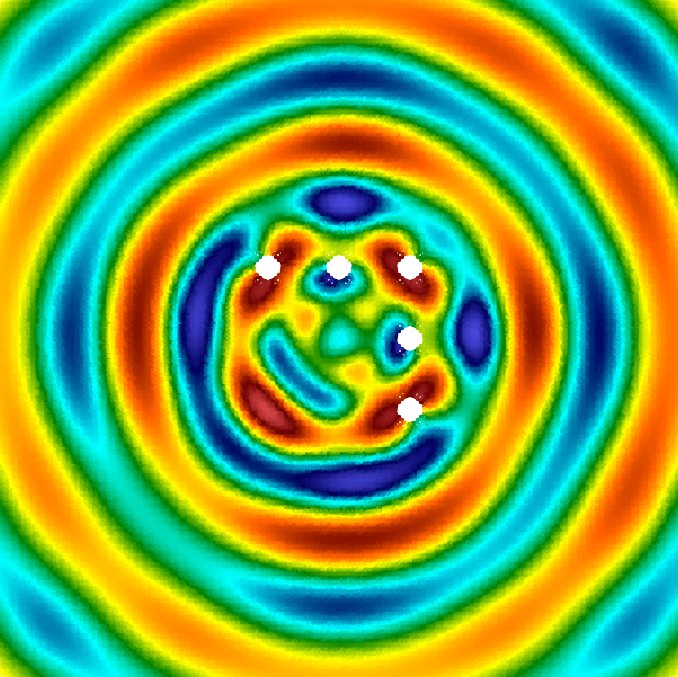}&
        \includegraphics[width=0.31\textwidth]{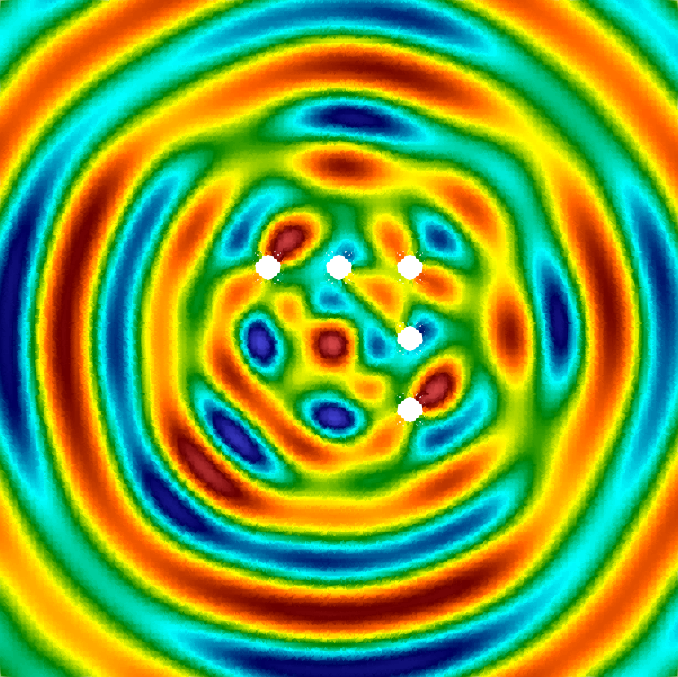}
    \end{tabular}
    \caption{Solution obtained with the \texttt{fiveHoles} mesh, on the top with a coarse grid and on the bottom with a finer grid. Example in Subsection~\ref{subsec:scattering}.}
    \label{exe5Diff}
\end{figure}

In Figure~\ref{exe8Diff} we consider the \texttt{eightHoles} case.
Also for this experiment we observe that the coarser mesh provides similar results of the ones obtained with the refined mesh.
Moreover, since the holes are displaced in a symmetric way with respect to $\bm x_c$
we expect the waves preserve a symmetric structure.
This is confirmed by the snapshots represented in Figure~\ref{exe8Diff}.
\begin{figure}[tbp]
    \centering
    \setlength{\tabcolsep}{0.01\textwidth}
    \begin{tabular}{ccc}
    step 13 & step 26 & step 42\\
    \includegraphics[width=0.31\textwidth]{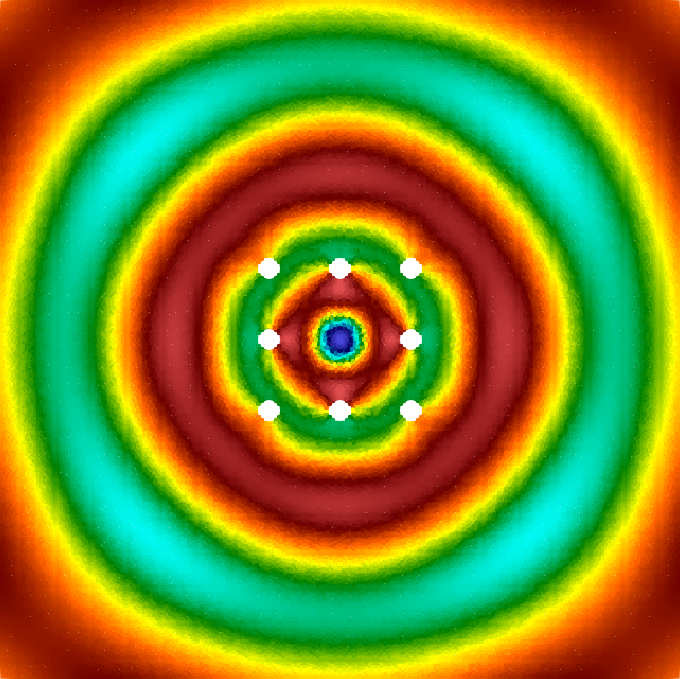}&
    \includegraphics[width=0.31\textwidth]{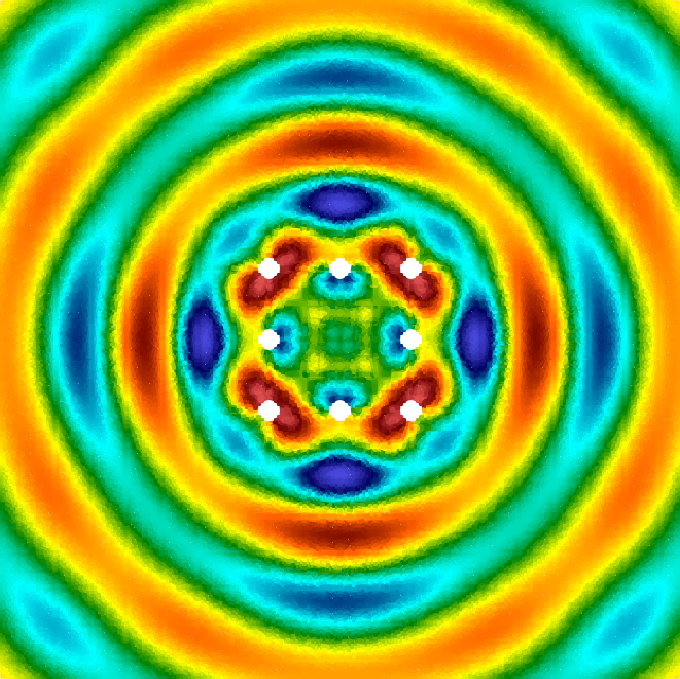}&
    \includegraphics[width=0.31\textwidth]{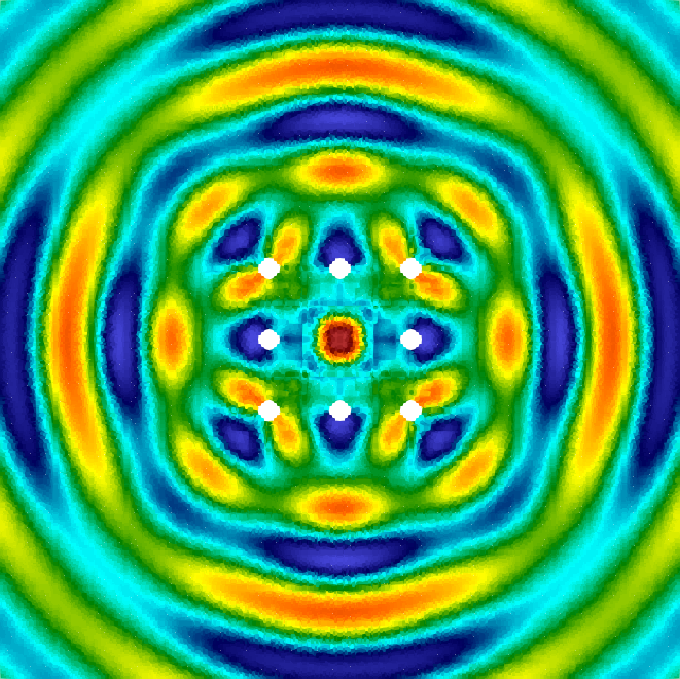}\\[0.01\textwidth]
    \includegraphics[width=0.31\textwidth]{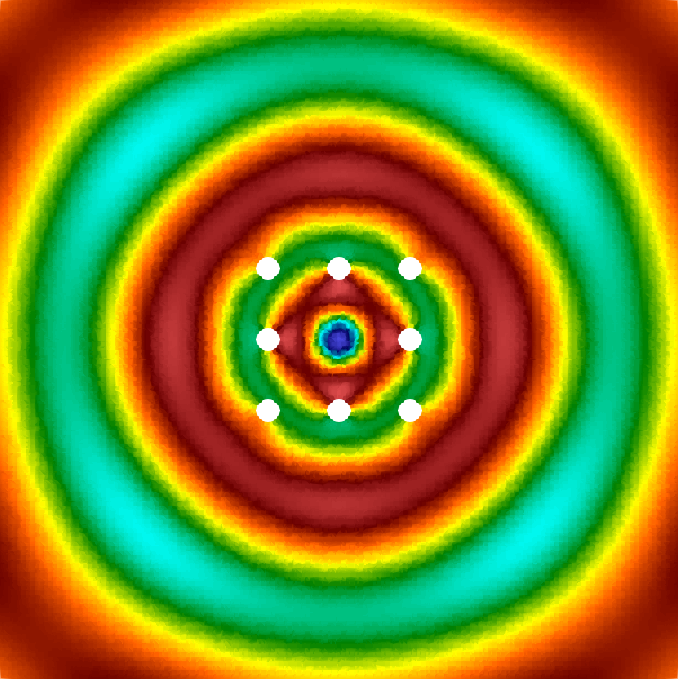}&
    \includegraphics[width=0.31\textwidth]{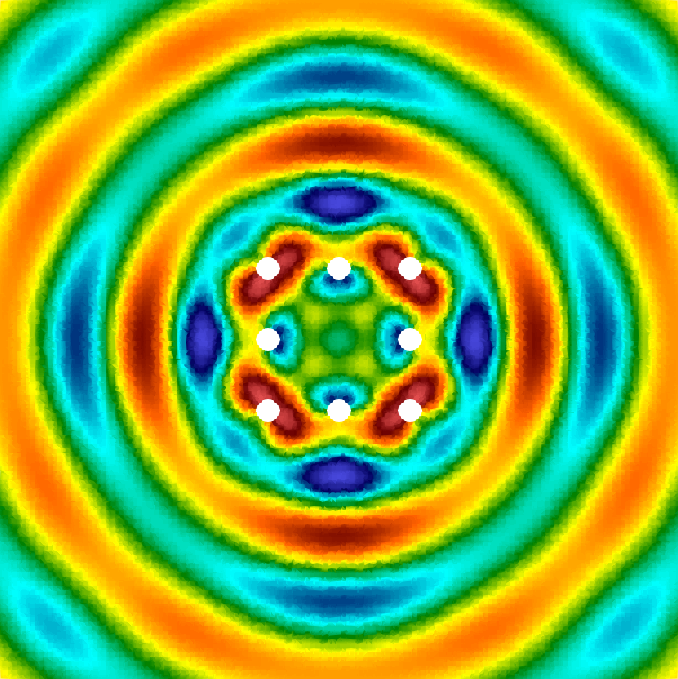}&
    \includegraphics[width=0.31\textwidth]{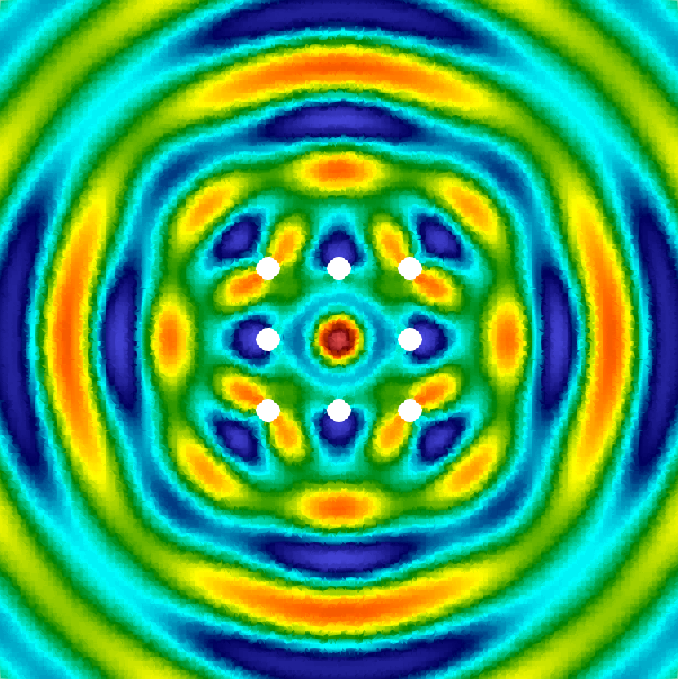}
    \end{tabular}
    \caption{Solution obtained with the \texttt{eightHoles} mesh, on the top with a coarse grid and on the bottom with a finer grid. Example in Subsection~\ref{subsec:scattering}.}
    \label{exe8Diff}
\end{figure}

\section{Conclusions}\label{sec:conclusion}

In this work, we presented a general order virtual element numerical discretization scheme to approximate the wave equation written in mixed form. The latter results in a first order system of differential equations in both space and time dimension.
We presented the \textit{a-priori} stability analysis as well as the convergence property of the scheme in a suitable energy norm and we showed that the proposed virtual element scheme preserve the semi-discrete energy of the system in absence of external load. To integrate in time the semi-discrete problem we consider the family of the $\theta$ - method schemes and we discuss their energy conservation properties.
We verified the theoretical results on different benchmark tests and we applied the proposed scheme on a domain with circular inclusions to show the capabilities of the method in term of accuracy and of flexibility in handling complex geometries.
To conclude, the presented mixed virtual element method allows a robust and flexible numerical discretization that can be successfully applied to wave propagation problems. Future developments in this direction may include the study of multi-physics problems (written in a mixed form) such as vibro-acoustics (with elastic or poroelastic structure) interaction problems.

\section*{Acknowledgments}
The authors acknowledge the financial support of INdAM-GNCS through Project ``Sviluppo ed analisi di Metodi agli Elementi Virtuali per
processi accoppiati su geometrie complesse''. Moreover, the authors would like to thank Anna Scotti for many fruitful discussions.


\bibliographystyle{plain}






\end{document}